\newtheorem{theorem}{Theorem}[section]
\newtheorem{corollary}[theorem]{Corollary}
\newtheorem{lemma}[theorem]{Lemma}
\theoremstyle{definition}
\newtheorem{definition}[theorem]{Definition}
\newtheorem{remark}[theorem]{Remark}
\numberwithin{equation}{section}
\newcommand{\eps}{\varepsilon}
\newcommand{\calT}{\mathcal{T}}
\newcommand{\calB}{\mathcal{B}}
\newcommand{\R}{\mathds{R}}
\newcommand{\e}{\varepsilon}
\newcommand{\wh}{\widehat}
\newcommand{\wt}{\widetilde}
\def\bv{{\bf v}}
\def\bx{{\bf x}}
\title[Hard ball collisions]{On the number of hard ball collisions}
\author{Krzysztof Burdzy and Mauricio Duarte}
\address{KB: Department of Mathematics, Box 354350, University of Washington, Seattle, WA 98195}
\email{burdzy@uw.edu}
\address{MD: Departamento de Matematicas, Facultad de Ciencias Exactas, Universidad Andres Bello, Santiago, Chile}
\email{mauricio.duarte@unab.cl}
\thanks{KB's research was supported in part by Simons Foundation Grant 506732. MD was supported by Programa Iniciativa Cientifica Milenio grant number NC120062 through the Nucleus Millenium Stochastic Models of Complex and Disordered Systems and FONDECYT project number 11160591}
\begin{document}

\begin{abstract}
We give a new and elementary proof   that the number of elastic collisions of a finite number of balls in the Euclidean space is finite. We  show that if there are $n$ balls of equal masses and radii 1, and 
 at the time of a collision between any two balls the distance between any other pair of balls is greater than $n^{-n}$,
then the total number of collisions is bounded by $n^{(5/2+\eps)n}$, for any fixed $\eps>0$ and large $n$. We also show that if there is a number of collisions larger than $n^{cn}$ for an appropriate $c>0$, then a large number of these collisions occur within a subfamily of balls that form a very tight configuration.
\end{abstract}

\maketitle

\section{Introduction}
\label{se:intro}

The purpose of this paper is to

(i)  give an ``elementary'' or ``conceptual'' proof of the claim  that the number of elastic collisions of a finite number of balls in the Euclidean space is finite, 

(ii) give a quantitative estimate for the time
when some subfamilies of the original family of balls stop to interact, 

(iii) give an explicit upper bound for the total number of collisions that is lower than the best known bound but requires extra assumptions, and

(iv) prove that if the number of collisions is very large then the balls have to form a tight configuration for an interval of time holding many collisions.

\subsection{Review of existing results}

The question of whether a finite system of hard balls in $\R^d$ can have an infinite number of elastic collisions was posed by Ya.~Sinai. It was answered in negative in \cite{Vaser79}. For alternative proofs see \cite{Illner89, Illner90,IllnerChen}. 
The papers \cite{BFK1,BFK2, BFK3,BFK4, BFK5} were the first to present universal bounds on the number of collisions of $n$ hard balls in any dimension.
It was proved in \cite{BFK1} that a system of $n$ balls in the Euclidean space undergoing elastic collisions can experience at most
\begin{align}\label{s26.1}
\left( 32 \sqrt{\frac{m_{\text{max}}}{m_{\text{min}}} } 
\frac{r_{\text{max}}}{r_{\text{min}}} n^{3/2}\right)^{n^2}
\end{align}
collisions. Here $m_{\text{max}}$ and $m_{\text{min}}$ denote the maximum and the minimum masses of the balls. Likewise, $r_{\text{max}}$ and $r_{\text{min}}$ denote the maximum and the minimum radii of the balls.
The  following alternative upper bound for the maximum number of collisions, not depending on the radii, appeared in \cite{BFK5},
\begin{align}\label{s26.2}
\left( 400 \frac{m_{\text{max}}}{m_{\text{min}}} 
 n^2\right)^{2n^4}.
\end{align}
  No improved universal bounds were found since then, as far as we know.
We conjecture that
the bounds in \eqref{s26.1}-\eqref{s26.2} are not  sharp. 

Let $K(n,d)$ be the maximum number of elastic collisions that $n$ balls in $\R^d$, of equal radii and masses, can undergo. It is easy to see that $K(n,1) =n(n-1)/2$ for $n\geq 2$, and that $K(n,d)$ is a non-decreasing function of $d$ (see \cite{BD}). Hence, $K(n,d) \geq n(n-1)/2$ for all $n\geq 2$ and $d\geq 1$.  Intuition may suggest that $K(n,d) =n(n-1)/2$ for every $d\geq 1$ because the balls are ``most constrained'' in one dimension; see \cite{MurCoh} for a historical review related to this point.
It turns out that this intuition is wrong. It is known that $K(3,2) = 4 > 3(3-1)/2$. An example showing that $K(3,2) \geq 4$ was found by J.D.~Foch and published in \cite{MurCoh}. The proof that $K(3,2) < 5$
was given in \cite{MC93}.​ 

It has been proved in \cite{BD} that if the balls have equal radii and masses then,
\begin{align}\label{d9.7}
K(n,d) \geq K(n,2) > n^3/27 \qquad \text{  for  } n\geq 3,\ d\geq 2.
\end{align}
There is a huge gap between the best known upper bounds in \eqref{s26.1}-\eqref{s26.2} and the best known lower bound in \eqref{d9.7}. This gap provides motivation for the present article.

\subsection{New results}

Our intention is to add some new methods to the existing techniques for proving finiteness of the number of collisions and for proving upper bounds for the number of collisions.
Papers
\cite{Vaser79,Illner89, Illner90,IllnerChen}
analyze a certain functional of the configuration but that analysis becomes qualitative at a certain point. We will develop a quantitative version of that method.
The authors of \cite{BFK1,BFK2, BFK3,BFK4, BFK5} translated the problem into the language of geometry of spaces with non-positive curvature (CAT(0) spaces).

First, in Theorem \ref{d10.2}, we will
 give an ``elementary'' or ``conceptual'' proof of the claim  that the number of elastic collisions of a finite number of balls in the Euclidean space is finite (the balls may have different masses and different radii). The new proof is based on simple properties of energy, momentum and elastic collisions so it is based more on physical intuition than on mathematical properties of the evolution. The proof contains almost no calculations. The main idea is that, for any fixed half-space, the component orthogonal to the half-space boundary of the total momentum of the  family of  balls that happen to be in the half-space at time $t$ is a monotone function of $t$. As a result, balls become ``ordered''  according to their velocities and some subfamilies of balls stop interacting. Our rigorous implementation of the idea departs somewhat from the above informal description.

Let the initial position of the $k$-th ball be denoted $x^k(0)\in\R^d$ and let $\bx(0)= (x^1(0), \dots, x^n(0)) \in \R^{nd}$.
Our second main result is the following.
\begin{theorem}\label{a8.4}
Consider $n$ balls of equal masses and radii 1.
Assume that the total momentum  of the balls is $0$ and their total energy is 1.
 The family of all $n$ balls can be partitioned into two non-empty subfamilies such that no ball from the first family will ever collide with a ball in the second family after time $100 n^3|\bx(0)|$.
\end{theorem}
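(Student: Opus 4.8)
The plan is to exploit two monotone quantities: a momentum functional attached to a moving subfamily of balls, and the moment of inertia about the centre of mass. Normalise all masses to $1$; then $\sum_k v^k(t)=0$, $\sum_k|v^k(t)|^2=2$ (so $|v^k(t)|\le\sqrt2$ for all $k$ and $t$), and the centre of mass $c=\frac1n\sum_k x^k(t)$ is independent of $t$, with $\sum_k|x^k(0)-c|^2\le|\bx(0)|^2$ and $|\bx(0)|\ge\sqrt2$ (two unit balls have centres at distance $\ge2$). For a unit vector $u\in\R^d$ and $a\in\R$ set
\[
P_{u,a}(t):=\sum_{k\,:\,(x^k(t)-c)\cdot u>a} v^k(t)\cdot u .
\]
First I would prove that $t\mapsto P_{u,a}(t)$ is non-decreasing: $P_{u,a}$ changes only when a ball crosses the hyperplane $H:=\{(x-c)\cdot u=a\}$ or when two balls collide. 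A ball crossing $H$ enters the half-space $\{(x-c)\cdot u>a\}$ exactly when its $u$-velocity is positive and leaves it exactly when its $u$-velocity is negative, so each crossing raises $P_{u,a}$; a collision between two balls on the same side of $H$ leaves $P_{u,a}$ unchanged by conservation of momentum; and a collision of a ball $i$ with $(x^i-c)\cdot u>a$ and a ball $j$ with $(x^j-c)\cdot u<a$ alters only $v^i\cdot u$ in the sum, by $[(v^j-v^i)\cdot e]\,(e\cdot u)$ with $e=(x^j-x^i)/|x^j-x^i|$; both factors are negative — the contact vector has $e\cdot u<0$, and the balls are approaching — so again $P_{u,a}$ increases. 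Degenerate instants (tangencies, simultaneous collisions, a contact direction parallel to $H$) produce no jump or may be ignored. Since $|P_{u,a}(t)|\le\sqrt{2n}$ for all $t,u,a$, each $P_{u,a}$ converges as $t\to\infty$; moreover $\frac{d}{dt}\sum_{k:(x^k-c)\cdot u>a}\big((x^k(t)-c)\cdot u-a\big)=P_{u,a}(t)$, with no jump contribution from crossings (a crossing ball contributes $0$ at the moment it crosses).

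Next I would quantify the inevitable dispersion. The moment of inertia $\Psi(t):=\sum_k|x^k(t)-c|^2$ has $\ddot\Psi=2\sum_k|v^k|^2=4$ on collision-free intervals, and $\dot\Psi$ jumps upward at every collision (by the analogous elastic-collision computation). Hence $\dot\Psi(t)\ge\dot\Psi(0)+4t\ge 4t-2\sqrt2\,|\bx(0)|$ and $\Psi(t)\ge 2t^2-2\sqrt2\,|\bx(0)|\,t\ge t^2$ once $t\ge2\sqrt2\,|\bx(0)|$. Fix $T_0:=n^3|\bx(0)|$, which exceeds $2\sqrt2\,|\bx(0)|$ for $n\ge2$, so $\Psi(T_0)\ge T_0^2$. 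Let $k_0$ be a ball furthest from $c$ at time $T_0$ and put $u:=(x^{k_0}(T_0)-c)/|x^{k_0}(T_0)-c|$. Then $(x^{k_0}(T_0)-c)\cdot u=|x^{k_0}(T_0)-c|\ge\sqrt{\Psi(T_0)/n}\ge T_0/\sqrt n$ equals $\max_k(x^k(T_0)-c)\cdot u$, while $\min_k(x^k(T_0)-c)\cdot u\le0$ because $\sum_k(x^k(T_0)-c)=0$. So the $n$ numbers $(x^k(T_0)-c)\cdot u$ span an interval of length at least $T_0/\sqrt n$, and by the pigeonhole principle two of them, consecutive when sorted, are separated by a gap $g\ge T_0/(\sqrt n\,(n-1))\ge n^{3/2}|\bx(0)|$, vastly larger than the ball diameter $2$. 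Taking $a$ to be the midpoint of this gap, and letting $A$, $B$ be the sets of balls whose $u$-coordinate is below, resp.\ above, $a$, both sets are non-empty and at time $T_0$ each lies more than $g/2$ from $H$ on its own side.

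It remains to show that $A$ and $B$ never collide again after time $100\,n^3|\bx(0)|$ — equivalently, that after $T_0$ the two blocks stay on opposite sides of $H$ (hence more than a ball-diameter apart in the $u$-direction) for all later times. Since $|v^k\cdot u|\le\sqrt2$, no ball can reach $u$-coordinate $a$ before time $T_0+g/(2\sqrt2)$, so on this long window $A,B$ are frozen, nothing crosses $H$, and $P_{u,a}$ is non-decreasing; and if $B$ ever acquires non-negative total $u$-momentum, $\frac{d}{dt}\sum_{k\in B}\big((x^k-c)\cdot u-a\big)=P_{u,a}\ge0$ (valid while $B$ remains the right block) drives $B$ away from $H$ as the configuration spreads further, while every crossing of $H$ by any ball makes the bounded quantity $P_{u,a}$ jump strictly up, so crossings with appreciable $u$-velocity are few. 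I expect the main obstacle to be exactly here: ruling out a ball that drifts slowly across $H$ before $P_{u,a}$ has turned favourable — crossings with tiny $u$-velocity. The natural remedy is to iterate: each time a ball does cross $H$, re-run the dispersion step at the crossing time to obtain a fresh wide gap (possibly in a new direction) and a fresh partition, the bound $\Psi(t)\ge t^2$ guaranteeing that one always exists; since $P_{u,a}\in[-\sqrt{2n},\sqrt{2n}]$ and the crossing-free windows have length comparable to $T_r/n^{3/2}$ at round $r$ (starting from $T_0$), the iteration should close with all rounds completed by time $O(n^3|\bx(0)|)$, the constant $100$ being the slack; the $\sqrt n$ in this accounting comes from projecting the energy onto a single direction and the remaining powers from the pigeonhole and the iteration. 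The residual bookkeeping — that the order of the balls within each block eventually freezes, and that the finitely many degenerate instants are harmless — is routine, and the partition in the statement is the one produced in the last round.
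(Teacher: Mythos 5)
Your dispersion step and the pigeonhole at time $T_0$ are sound, but the proof has a genuine gap exactly where you flag it: you never establish that the two blocks stay separated, and this is the whole content of the theorem. The monotonicity of the half-space momentum $P_{u,a}$ is a qualitative tool: it is bounded and non-decreasing, hence convergent, but its total possible increase is consumed in increments of size $2|v^k\cdot u|$ at each crossing of $H$, so a ball drifting across $H$ with tiny $u$-velocity costs almost nothing. Consequently $P_{u,a}$ bounds neither the number of slow crossings nor the time by which they cease --- this is precisely why the half-space/ordering argument in Section 2 of the paper yields only finiteness and no quantitative bound. Your proposed remedy (re-run the dispersion at each crossing, in a possibly new direction, and argue that the rounds terminate by time $O(n^3|\bx(0)|)$) is not carried out: once $u$ and $a$ change between rounds, the monotone functional for the old pair says nothing about the new one, and nothing in the sketch bounds the number of rounds or prevents a ball from creeping across each successive (wider) gap arbitrarily slowly. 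As written, ``the iteration should close'' is an assertion, not a proof.

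The paper avoids this obstacle by working in the configuration space $\R^{nd}$ rather than with physical half-spaces. The angle $\angle(\bx(t),\bv(t+))$ is strictly decreasing, and since $|\bv|\equiv 1$ an elementary law-of-sines estimate gives $\angle(\bx_0(t),\bv(0+))\le 2|\bx(0)|/t$, so after time $T=18\sqrt{n}(n-1)|\bx(0)|$ this angle is at most $1/(9\sqrt{n}(n-1))$. Combined with the comparison lemmas for the cut trajectories $\bx_u$ (monotonicity of the angle and of $|\bx_u|$ in $u$, and the angle bound between $\bx(s)$ and $\bx(t)$), this yields $|\bv(s+)-\bv(t+)|\le 1/(3\sqrt{n}(n-1))$ for all $s,t\ge T$: every individual velocity is essentially frozen after $T$. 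The pigeonhole is then applied to the velocities $v^k(T+)$ rather than to the positions, producing a partition with a velocity gap of $1/(\sqrt{n}(n-1))$ across it, after which permanent separation follows from linear drift well before time $100n^3|\bx(0)|$. To salvage your route you would need some quantitative substitute for this velocity-freezing step; the half-space momentum alone cannot supply it.
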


The above result transforms some qualitative arguments originally developed in 
\cite{Vaser79,Illner89, Illner90} into a quantitative estimate. 
As a corollary, we will obtain the following.

\begin{theorem}
\label{th:nc}
Consider $n$ balls of equal masses and  radii  1.
Suppose that  at the time of a collision of any two balls, the distance between any other pair of balls is greater than $n^{-n}$.
Then, for any $\eps >0$, the total number of collisions is bounded by $n^{5n/2 + \eps n}$, for large $n$.
\end{theorem}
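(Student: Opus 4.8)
The plan is to iterate Theorem~\ref{a8.4} and to feed it with an elementary upper bound, coming from the $n^{-n}$ hypothesis, on the number of collisions possible in a bounded time interval. First I would normalize. The collision sequence of a system does not change if one adds a fixed vector to all velocities, nor if one multiplies all velocities by a positive constant while dividing time by the same constant, so one may assume that the total momentum is $0$ and the total energy is $1$; in this frame every speed is at most $\sqrt2$, the closing speed of any pair is at most $2\sqrt2$, and in time $t$ the vector $\bx$ moves by at most $\sqrt{2n}\,t$ in $\R^{nd}$. One should also arrange, by a preliminary reduction that discards balls too far away to ever take part in a collision (legitimate because the theorem is proved by induction on $n$ and discarding non-colliding balls only decreases $n$), that $|\bx(0)|$ is bounded by a polynomial in $n$.

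\emph{Collisions in a bounded time interval.} I claim that under the hypothesis two consecutive collisions of the full system differ in time by at least $n^{-n}/(2\sqrt2)$, so that any interval of length $T$ carries at most $1+2\sqrt2\,Tn^{n}$ collisions. Indeed, the hypothesis first forces every collision to involve exactly one pair of balls. Next, two consecutive collisions cannot involve the same pair $\{i,j\}$: right after an $\{i,j\}$-collision one has $(x^i-x^j)\cdot(v^i-v^j)>0$, and since velocities are constant until the next collision, $|x^i(\cdot)-x^j(\cdot)|^2$ is then strictly increasing, so $i$ and $j$ cannot recollide before some other pair does. Finally, if the collision at time $t_2$ that follows an $\{i,j\}$-collision at time $t_1$ involves a pair $\{k,l\}\ne\{i,j\}$, then at time $t_1$ the gap between $k$ and $l$ exceeds $n^{-n}$, and this gap decreases at rate at most $2\sqrt2$, so $t_2-t_1>n^{-n}/(2\sqrt2)$.

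\emph{The recursion.} By Theorem~\ref{a8.4}, after time $T_1=100n^3|\bx(0)|$ the balls split into two families $F_1,F_2$, each of size $<n$, that never interact with each other again. Hence the total number of collisions is at most
\[
\bigl(\text{collisions in }[0,T_1]\bigr)+\bigl(\text{collisions of }F_1\text{ after }T_1\bigr)+\bigl(\text{collisions of }F_2\text{ after }T_1\bigr),
\]
the first term being $O(n^{n+3}|\bx(0)|)$ by the previous step. Each subfamily is handled by the same scheme after renormalizing it (translate to its centre of mass, which moves at constant velocity, and rescale time so its energy is $1$); its configuration norm, measured from its centre of mass at time $T_1$, is at most $O(|\bx(0)|)+\sqrt{2n}\,T_1=O(n^{7/2}|\bx(0)|)$, its internal speeds being again at most $\sqrt2$, and the gap hypothesis (with the same threshold $n^{-n}$) passes to it. Singletons contribute nothing, so the recursion terminates; at deep levels one may also invoke the a priori finiteness bound, which depends on the size of the subfamily alone, in place of the time estimate. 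Collecting all contributions over the $O(n)$ nodes of the recursion tree produces a bound of the form $n^{cn}$, with the slack $n^{\eps n}$ absorbing the polynomial powers of $n$, the exponential constants, and the power of $n$ bounding $|\bx(0)|$.

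\emph{The main obstacle.} The crux is precisely this accounting. A careless estimate multiplies the configuration norm by $n^{7/2}$ at each of as many as $n$ levels and gives an exponent well above $5n/2$. To reach $5n/2+\eps n$ one has to balance two competing effects across the recursion tree: near the root the subfamilies are large but geometrically tame, so the time estimate contributes roughly $n^{n}$ per unit time over a short interval, while deep down the subfamilies are tiny---hence have only boundedly many collisions irrespective of how far they have spread---so their inflated configuration norms never actually get charged the rate $n^n$. Making this amortization precise, so that the product of the effective total time and the collision rate lands at $n^{5n/2}$ up to the $n^{\eps n}$ slack, is the real work; the scheme above already yields, and more simply, the qualitative statement that the number of collisions is finite.
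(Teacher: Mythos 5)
Your skeleton---induct by splitting the family via Theorem \ref{a8.4}, and control the remaining initial time interval using the fact that the separation hypothesis forces consecutive collisions to be at least $\mathrm{const}\cdot n^{-n}$ apart in time---is the same as the paper's (Theorem \ref{th:nf}), and the time-separation step itself is carried out correctly. But the proof as proposed does not reach the stated bound, and you have in fact located the unfilled gap yourself. There are two problems. The lesser one is the preliminary reduction to $|\bx(0)|=O(\mathrm{poly}(n))$: discarding balls that never collide does not achieve this, since a system whose collision graph is connected can still have most balls far from the origin at every single time (e.g.\ long chains of widely separated clusters that interact sequentially), so nothing can be discarded and you give no argument for the reduction. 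The paper makes no such assumption; it splits into the cases $|\bx(0)|\le 6n^{3/2}$ and $|\bx(0)|> 6n^{3/2}$ and handles the second by a separate mechanism.

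That mechanism is precisely what your ``amortization'' paragraph is missing, and your sketch of it does not work: a subfamily at depth $k$ of the recursion tree can have $n-k$ balls (neither ``tiny'' nor covered by an a priori bound of acceptable size) while its configuration norm has already compounded to $(Cn^{7/2})^{k}$, so charging it the rate $n^{n}$ per unit time over an interval of that length destroys the exponent, exactly as you fear. The paper's fix is to \emph{never} apply the rate bound $\delta^{-1}=n^{n}$ to a spread-out configuration. When $|\bx(s)|>6n^{3/2}$, a pigeonhole over the chain of pairwise distances (see \eqref{s28.2}) yields two nonempty subfamilies separated by at least $|\bx_0(s)|\,n^{-3/2}$, hence non-interacting for a time of length $|\bx_0(s)|\,n^{-3/2}/4$, during which the collision count is at most $2K_\delta(n-1)$; since $|\bx_0(s)|$ grows at least linearly in $s$, these epochs lengthen geometrically and only $O(n^{3/2}\log n)$ of them are needed to cover $[0,100n^{3}|\bx(0)|]$, no matter how large $|\bx(0)|$ is (see \eqref{a7.3}--\eqref{a7.5}). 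Consequently the factor $n^{n}$ is paid only at nodes where the configuration is compact---over a time that is a fixed polynomial in $n$---and enters the final bound once, while the per-level cost of the recursion is only $O(n^{3/2}\log n)$; the recursion then solves to $(n!)^{3/2}$ times exponential and logarithmic factors times $\delta^{-1}$, i.e.\ $n^{3n/2+o(n)}\cdot n^{n}=n^{5n/2+o(n)}$. Without this dichotomy your scheme yields finiteness, but not the exponent $5n/2+\eps n$.
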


We will also use Theorem \ref{a8.4} to derive the following result. Suppose that a very large number of collisions occur. Then a smaller but also a very large number of collisions will have to occur in an interval of time during which a subset of the balls form a very tight configuration.
The main assumption of the theorem is that there are more than $n^{cn}$ collisions for an appropriate constant $c$. This assumption  might be void, that is, it is possible (we would even say likely) that the number of collisions is never that high. However, the theorem in the present form may be a precursor to a non-void  result, with  less stringent assumptions on the number of collisions. The theorem also gives moral support to the ``pinned billiards model'' investigated in a forthcoming paper \cite{ABD}.

Consider $n$ balls $B_k$ in $\R^d$ of equal masses and  radii  1, colliding elastically. 
For a fixed $\rho>0$,
let $\Gamma_\rho(t)$ be the graph whose vertices are balls $B_1, B_2, \dots, B_n$. Two vertices $B_j$ and $B_k$ are connected by an edge in $\Gamma_\rho(t)$  if and only if $|x^j(t)-x^k(t)|\leq 2+\rho$.

We will say that a subfamily $\{B_{i_1}, B_{i_2}, \dots, B_{i_k}\}$ of balls is $\rho$-connected in $[s,u]$ if for every $t \in [s,u]$, all balls $\{B_{i_1}, B_{i_2}, \dots, B_{i_k}\}$ belong to a  connected component of $\Gamma_\rho(t)$ (the connected component may depend on $t\in [s,u]$). 

\begin{theorem}
\label{th:Np}
Let $\rho \leq n^{-n}$ and $N>1$ be such that $\log(N\rho) > (3/2+\eps)n\log n$ for some $\eps>0$ independent of $n$. If the total number of collisions is greater than or equal to $N$ then there exist $n_0$, a family $\calB:=\{B_{i_1}, B_{i_2}, \dots, B_{i_k}\}$ of balls and an interval $[t_1,t_2]$ such that $\calB$ is $\rho$-connected in $[t_1,t_2]$ and there are more than $N\rho n^{-(3/2 + o(1))n}$ collisions among balls in $\calB$  on $[t_1,t_2]$, for $n\geq n_0$.
\end{theorem}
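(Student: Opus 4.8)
The plan is to iterate Theorem~\ref{a8.4}. By Galilean invariance we may assume the total momentum is $0$, and by rescaling time — which does not change the sequence of collisions — that the total energy is $1$; then every velocity has length at most $\sqrt2$, so over any time interval of length $\Delta$ each distance $|x^i(t)-x^j(t)|$ changes by at most $2\sqrt2\,\Delta$. Next, apply Theorem~\ref{a8.4} recursively to build a rooted binary tree of subfamilies of balls: the root is the whole family, ``born'' at time $0$; a node $G$ born at time $s_G$ is, within time $\tau_G\le 100\,|G|^3\,|\bx_G(s_G)|$ (here $|\bx_G(s_G)|$ is the norm of the position vector of $G$ in its own center-of-mass frame), split by Theorem~\ref{a8.4} into two nonempty subfamilies that never interact again, which become its children, born at $e_G:=s_G+\tau_G$; singletons are leaves. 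Passing to the center-of-mass frame of $G$ (which is closed from time $s_G$ on) and rescaling its internal energy, which is at most $1$, up to $1$ only speeds up the clock, so the bound on $\tau_G$ is valid in physical time. This tree has depth at most $n-1$ and at most $2n-1$ nodes, and every collision, say between $B_a$ and $B_b$, occurs during the active interval $[s_G,e_G]$ of the unique node $G$ that is minimal for inclusion among those containing both $B_a,B_b$ and active at the collision time; in particular both colliding balls belong to that $G$.

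Since $N\le\sum_G c_G$, where $c_G$ counts the collisions charged to $G$ as above and there are at most $2n-1$ nodes, some node $G^\ast$ — of size $m\le n$, active on an interval $[s^\ast,e^\ast]$ of length $\tau^\ast$ — carries at least $N/(2n)$ collisions, and since each of these involves two of the $\le n$ balls of $G^\ast$, some ball $B_j$ takes part in at least $N/(2n^2)$ of them. Cut $[s^\ast,e^\ast]$ into consecutive windows of length $\rho/(2\sqrt2)$; by the pigeonhole principle one window $[t_1,t_2]$ contains at least $cN\rho/(n^2\tau^\ast)$ of the collisions of $B_j$, for an absolute constant $c>0$ (assuming $\tau^\ast\ge\rho$, which holds for $n\geq n_0$; if $\tau^\ast<\rho$ the window argument is even more favorable). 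Let $\calB:=\{B_j\}\cup\{B:\ B$ collides with $B_j$ on $[t_1,t_2]\}$. If $B_\ell\in\calB$ collides with $B_j$ at time $\theta_\ell\in[t_1,t_2]$, then $|x^j(\theta_\ell)-x^\ell(\theta_\ell)|=2$, hence $|x^j(t)-x^\ell(t)|\le 2+2\sqrt2\,|t-\theta_\ell|\le 2+\rho$ for every $t\in[t_1,t_2]$; therefore $\calB$ is $\rho$-connected on $[t_1,t_2]$, being a star centered at $B_j$ inside $\Gamma_\rho(t)$ for each such $t$, and on $[t_1,t_2]$ it undergoes at least as many collisions as $B_j$ does, namely at least $cN\rho/(n^2\tau^\ast)$.

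It therefore suffices to show $\tau^\ast\le n^{(3/2+o(1))n}$: then $cN\rho/(n^2\tau^\ast)>N\rho\,n^{-(3/2+o(1))n}$ (the $c$ and $n^2$ being absorbed into the $o(1)$) and the theorem follows with $\calB$ and $[t_1,t_2]$ as above. This is where the real work lies. From $\tau^\ast\le 100\,m^3\,|\bx_{G^\ast}(s^\ast)|$ the task reduces to bounding how spread out $G^\ast$ is at the moment it is born, and the crude estimate obtained by following the configuration size through the tree is far too weak: it grows by a factor $n^{O(1)}$ at each of the $\le n$ levels and, worse, never sheds the spurious factor $|\bx(0)|$, giving only $|\bx(0)|\cdot n^{O(n)}$ with the wrong exponent.

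I expect this to be handled by running the decomposition \emph{adaptively} rather than building the whole tree at once: descend one level at a time into a subfamily that still carries a fraction $n^{-O(1)}$ of the remaining collisions, and at a suitably chosen collision time replace it by a spatially compact version of itself — a subfamily of diameter at most $(n-1)(2+\rho)=O(n)$, hence with $|\bx|=O(n^{3/2})$ — before reapplying Theorem~\ref{a8.4}, whose splitting time is then only $O(n^{9/2})$ and no longer accumulates; after the $\le n$ descents the total loss is $n^{O(n)}$ with exponent $3/2$, and a single factor $\rho$ is shed in the windowing step. The main obstacle will be to carry this out without losing too much: a compact subfamily selected this way need not be a closed subsystem (balls outside it may strike it), so one needs either a form of Theorem~\ref{a8.4} robust to bounded outside interference until the first split, or a separate argument that over the short relevant time the subfamily may be treated as closed — together with a choice of the subfamily ensuring it inherits a fraction $n^{-O(1)}$ of the collisions, so that the per-level loss stays polynomial in $n$.
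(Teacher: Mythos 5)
Your first-level strategy (recursive splitting via Theorem~\ref{a8.4}, pigeonholing down to one subfamily and one ball, then a short time window in which colliding balls stay within $2+\rho$ of each other) matches the spirit of the paper's argument, but the proof has a genuine gap that you yourself flag: everything reduces to showing $\tau^*\le n^{(3/2+o(1))n}$, and this is precisely the step that is not carried out. The difficulty is real and, in the form you set it up, not repairable by the adaptive scheme you sketch. Two separate problems arise. First, the physical duration $\tau_G$ of a node's active interval is not controlled by $100\,|G|^3\,|\bx_G(s_G)|$: to apply Theorem~\ref{a8.4} to a subfamily you must rescale its internal energy (which may be any $E\le 1$) up to $1$, and undoing that rescaling multiplies the splitting time by $E^{-1/2}\ge 1$, not by something at most $1$ --- your remark that the rescaling ``only speeds up the clock'' points the wrong way, and $E$ can be arbitrarily small. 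Second, even ignoring energy, $|\bx_G(s_G)|$ accumulates multiplicatively down the tree and retains the factor $|\bx(0)|$, which is unbounded. The root cause is that the quantity you pigeonhole over --- the number of time windows of length $\rho/(2\sqrt2)$ in $[s^*,e^*]$ --- is not invariant under the Galilean, energy, and time renormalizations that make the recursion work, so no uniform bound on it in terms of $n$ and $\rho$ alone can exist.

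The paper resolves exactly this by replacing ``number of time windows of length about $\rho$'' with a scale-invariant counter: for each pair $(i,j)$ it counts the upcrossings of the band $[2+\rho/2,\,2+\rho]$ by $t\mapsto|x^i(t)-x^j(t)|$, and lets $M_\rho(n)$ be the supremum over all initial data of the total upcrossing count $S(0,\infty)$. Because upcrossings are preserved under changes of inertial frame and time rescaling, $M_\rho$ satisfies the same type of recursion as the collision count in Theorem~\ref{th:nf}: after the splitting time $T$ the two subfamilies contribute at most $2M_\rho(n-1)$, while on $[0,T)$ either the configuration is compact ($|\bx(0)|\le 6n^{3/2}$), in which case each upcrossing costs time at least $\rho/2$ and one gets $O(n^{13/2}/\rho)$ upcrossings --- this is the \emph{only} place a factor $\rho^{-1}$ enters --- or $|\bx(0)|$ is large and $[0,T)$ is covered by $O(n^{3/2}\log n)$ subintervals on each of which the family already splits into non-interacting parts. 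Solving the recursion gives $M_\rho(n)\le\rho^{-1}n^{(3/2+o(1))n}$, and the endpoints of the upcrossing intervals (at most $2M_\rho(n)$ of them) play the role of your window boundaries; inside one such interval the pairs that have already collided never exceed distance $2+\rho$, and an equivalence-class argument on the ``has collided'' relation produces the $\rho$-connected family $\calB$. If you want to salvage your write-up, the fix is to abandon the fixed-length time windows and prove a recursive bound on this upcrossing functional instead.
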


\begin{corollary}\label{a11.2}
If the total number of collisions is greater than or equal to $n^{\left(5/2+\eps\right)n}$ for some $\eps>0$ then there exist $n_0$, a family $\calB:=\{B_{i_1}, B_{i_2}, \dots, B_{i_k}\}$ of balls and an interval $[t_1,t_2]$ such that $\calB$ is $n^{- n}$-connected in $[t_1,t_2]$ and there are more than $n^{\eps n/2}$ collisions among balls in $\calB$  on $[t_1,t_2]$, for $n\geq n_0$.
\end{corollary}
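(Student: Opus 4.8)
The plan is to obtain Corollary~\ref{a11.2} as an immediate specialization of Theorem~\ref{th:Np}, with the choices $\rho = n^{-n}$ and $N = n^{(5/2+\eps)n}$, where $\eps>0$ is the constant from the hypothesis of the corollary. So assume the total number of collisions is at least $n^{(5/2+\eps)n} = N$.

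First I would check that the hypotheses of Theorem~\ref{th:Np} are met. The condition $\rho \le n^{-n}$ holds with equality, and $N > 1$ for $n \ge 2$. For the remaining condition, compute $N\rho = n^{(5/2+\eps)n}\cdot n^{-n} = n^{(3/2+\eps)n}$, so that $\log(N\rho) = (3/2+\eps)n\log n > (3/2+\eps/2)n\log n$ for all $n\ge 2$; thus the hypothesis of Theorem~\ref{th:Np} holds with its constant taken to be $\eps/2$. Theorem~\ref{th:Np} then provides $n_0$, a family $\calB = \{B_{i_1},\dots,B_{i_k}\}$ and an interval $[t_1,t_2]$ such that $\calB$ is $\rho$-connected, that is, $n^{-n}$-connected, in $[t_1,t_2]$, and such that the number of collisions among balls of $\calB$ on $[t_1,t_2]$ exceeds
\[
N\rho\, n^{-(3/2+o(1))n} = n^{(3/2+\eps)n}\, n^{-(3/2+o(1))n} = n^{(\eps - o(1))n}.
\]
Finally, since $o(1)\to 0$ as $n\to\infty$, there is $n_1\ge n_0$ such that $\eps - o(1) > \eps/2$ for $n\ge n_1$, and then $n^{(\eps-o(1))n} > n^{\eps n/2}$; replacing $n_0$ by $n_1$ yields the corollary.

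There is no real obstacle here: the corollary amounts to a substitution followed by bookkeeping in the exponent. The only point deserving a line of care is that the $o(1)$ appearing in the conclusion of Theorem~\ref{th:Np} should be uniform over the admissible pairs $(N,\rho)$ — in particular independent of the specific choice $N = n^{(5/2+\eps)n}$, $\rho = n^{-n}$ — so that the threshold $n_1$ can be chosen to depend only on $\eps$; this should be read off from the proof of that theorem.
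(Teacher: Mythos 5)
Your proof is correct and follows essentially the same route as the paper: substitute $N=n^{(5/2+\eps)n}$ and $\rho=n^{-n}$ into Theorem~\ref{th:Np}, compute $N\rho\,n^{-(3/2+o(1))n}=n^{(\eps-o(1))n}$, and absorb the $o(1)$ for large $n$. Your extra care on the strict inequality in the hypothesis (using $\eps/2$) and on the uniformity of the $o(1)$ term is warranted and consistent with how that term arises in the paper's proof of the theorem.
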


\begin{corollary}\label{a11.1}
If the total number of collisions is greater than or equal to $n^{n^\alpha}$ for some $\alpha\in(1,2]$ then there exist $n_0$, a family $\calB:=\{B_{i_1}, B_{i_2}, \dots, B_{i_k}\}$ of balls and an interval $[t_1,t_2]$ such that $\calB$ is $n^{-\frac 1 3 n^\alpha}$-connected in $[t_1,t_2]$ and there are more than $n^{\frac 1 3 n^\alpha}$ collisions among balls in $\calB$  on $[t_1,t_2]$, for $n\geq n_0$.
\end{corollary}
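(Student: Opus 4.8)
The plan is to deduce Corollary~\ref{a11.1} from Theorem~\ref{th:Np} by a suitable choice of the parameters $\rho$ and $N$, followed by a verification that the hypotheses hold for all sufficiently large $n$. Concretely, I would take $N := n^{n^\alpha}$ and $\rho := n^{-\frac13 n^\alpha}$.

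First I would check the constraint $\rho \le n^{-n}$: it is equivalent to $\tfrac13 n^\alpha \ge n$, i.e.\ $n^{\alpha-1}\ge 3$, which holds for large $n$ because $\alpha>1$. Next I would check the condition $\log(N\rho) > (3/2+\eps)n\log n$. Since $\log(N\rho) = \bigl(n^\alpha - \tfrac13 n^\alpha\bigr)\log n = \tfrac23 n^{\alpha-1}\cdot n\log n$ and $n^{\alpha-1}\to\infty$, the quantity $\tfrac23 n^{\alpha-1}$ eventually exceeds $3/2+\eps$ with the fixed choice $\eps=1$, so the condition holds for large $n$. By hypothesis the total number of collisions is at least $N = n^{n^\alpha}$, so Theorem~\ref{th:Np} applies and yields some $n_0$, a family $\calB$ and an interval $[t_1,t_2]$ such that $\calB$ is $\rho$-connected, i.e.\ $n^{-\frac13 n^\alpha}$-connected, in $[t_1,t_2]$, and such that there are more than $N\rho\, n^{-(3/2+o(1))n}$ collisions among balls of $\calB$ on $[t_1,t_2]$.

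It remains to simplify this lower bound. We have $N\rho\, n^{-(3/2+o(1))n} = n^{\frac23 n^\alpha - (3/2+o(1))n}$, and since $3/2 + o(1) \le 2$ for large $n$ the exponent is at least $\tfrac23 n^\alpha - 2n$; this exceeds $\tfrac13 n^\alpha$ as soon as $n^{\alpha-1}\ge 6$, again true for large $n$ since $\alpha>1$. Hence the number of collisions among balls of $\calB$ on $[t_1,t_2]$ exceeds $n^{\frac13 n^\alpha}$, which is the assertion of the corollary; the $n_0$ in the statement is taken to be the maximum of the one produced by Theorem~\ref{th:Np} and the various thresholds used above.

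There is no serious obstacle in this argument; the only point requiring care is the bookkeeping of the ``for large $n$'' thresholds together with the implicit $o(1)$ inherited from Theorem~\ref{th:Np}. The reason the estimates close is precisely that $\alpha > 1$: every loss incurred along the way is of the form $n^{O(n)}$, which is negligible against the super-exponential collision count $n^{n^\alpha}$, so the exponent $n^\alpha$ can be split into the part $\tfrac23 n^\alpha$ spent on making $\rho$ small and a remainder still dominating $\tfrac13 n^\alpha$.
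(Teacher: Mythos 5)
Your proposal is correct and follows essentially the same route as the paper: the same choices $N=n^{n^\alpha}$, $\rho=n^{-\frac13 n^\alpha}$, the same verification that $\rho\le n^{-n}$ and $\log(N\rho)=\tfrac23 n^\alpha\log n$ dominates $(3/2+\eps)n\log n$, and the same final comparison $\tfrac23 n^\alpha-(3/2+o(1))n>\tfrac13 n^\alpha$ for large $n$. Your bookkeeping of the ``large $n$'' thresholds is in fact slightly more explicit than the paper's.
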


The paper consists of five more sections. Section \ref{newproof} contains 
a new proof that the number of collisions is finite.
Section \ref{se:notation} collects notation and assumptions for the remaining part of the paper.
Section \ref{se:geometry} is devoted to the analysis of some functionals.
A new upper bound for the number of collisions is given in Section \ref{se:number}.
Section \ref{connect} contains the proof that balls form a tight configuration when a very large number of collisions occur.
An essential difference between Section \ref{newproof} and the sections following it is that the balls may have arbitrary masses and radii in Section \ref{newproof} but they are assumed to have identical masses and radii 1 in Sections \ref{se:notation}-\ref{connect}.

\section{The number of collisions  is finite}\label{newproof}

 We will consider $n\geq2$ hard balls $B_k$ in $\R^d$, for $d\geq 1$, colliding elastically. 
In this section, the balls may have different masses and different radii.

We say that a ``simultaneous collision'' occurs at time $t$ if there is a collection of balls $\{B_{i_1}, B_{i_2}, \dots, B_{i_k}\}$ for some $k\geq3$, such that for any two balls $B_{i_\ell}$ and $B_{i_p}$ in the family, there exist $j_1=i_\ell, j_2, \dots,j_{m-1}, j_m=i_p$ such that $B_{j_r}$ is in contact with $B_{j_{r+1}}$ at time $t$ for all $r=1, \dots, m-1$. For example, if a ball $B_1$ touches $B_2$ at time $t$ and balls $B_3$ and $B_4$ also touch at time $t$ but none of the balls from the first pair touches a ball from the second pair, we do not call $t$ a simultaneous collision time. This type of simultaneous occurrence of two collisions does not present any technical difficulties.

 It is known that the set of vectors in the phase space of positions and velocities that lead to simultaneous collisions has  measure zero (see \cite{Alex76}). 

If there are infinitely many collisions on a finite time interval then it is easy to see, using continuity of ball trajectories, that there exists a simultaneous collision. We will assume that there are no simultaneous collisions so the number of collisions will be finite on every finite time interval.

It should be pointed out that evolutions with simultaneous collisions are ``degenerate'' in the sense that the usual laws of physics (conservation of energy, momentum and angular momentum) do not uniquely determine the outgoing velocities (see, for example, \cite{Vaser79}). 

  We will assume  that the momentum of the system is zero. We can make this assumption because the number of collisions is the same in all inertial frames of reference. Since the total momentum is zero, the center of mass of all balls is constant, so it can and will be assumed to be at origin. 

Consider two moving balls $B_1$ and $B_2$ with centers $x^1(t)$ and $x^2(t)$, and velocities $v^1(t)$ and $v^2(t)$.
Suppose that the balls collide at time $t$. Let $P$ be the hyperplane tangent to the balls at the point of collision.
Let $v^1(t-) = \wt v^1(t-) + \wh v^1(t-)$, where $\wt v^1(t-) $ is orthogonal to $P$ and $\wh v^1(t-)$ is parallel to $P$. We decompose $v^2(t-)$ in an analogous way as $v^2(t-) = \wt v^2(t-) + \wh v^2(t-)$. The components $\wh v^1(t-)$ and $\wh v^2(t-)$ of the velocities will not change at the time $t$ of the collision.
Let $e_P = x^1(t) -x^2(t) $.
For the collision to take place we must have $(\wt v^1(t-)-\wt v^2(t-))\cdot e_P < 0$. It is easy to see that at the time of the collision  $\wt v^1\cdot e_P$ will have a positive jump and  $\wt v^2\cdot e_P$ will have a negative jump.

\begin{theorem}\label{d10.2}
(\cite{Vaser79})
The number of elastic collisions of a finite number of balls in the Euclidean space is finite, assuming no simultaneous collisions. 
\end{theorem}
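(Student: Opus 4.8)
The plan is an induction on the number $n$ of balls, the inductive step being the assertion that the family eventually splits into two nonempty subfamilies that never again collide with one another. Precisely, it suffices to produce a finite time $T_0$ and a partition of $\{B_1,\dots,B_n\}$ into nonempty sets $\calA,\calB$ such that no ball of $\calA$ touches a ball of $\calB$ after $T_0$: granting this, after $T_0$ the balls of $\calA$ evolve among themselves and likewise for $\calB$, so the inductive hypothesis bounds the number of collisions after $T_0$, while (bounded speeds, no simultaneous collisions) there are finitely many collisions before $T_0$; the case $n=1$ is trivial. As set up above, we take the total momentum to be $0$ and the center of mass at the origin, and we write $E$ for the (conserved) total energy, so every speed is at most $\sqrt{2E/m_{\text{min}}}$.

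The engine is the following half-space monotonicity. Fix a unit vector $\mathbf u\in\R^d$ and $a\in\R$, and put
\[
 P_{\mathbf u,a}(t):=\sum_{k\,:\,x^k(t)\cdot\mathbf u\,\ge\,a} m_k\,v^k(t)\cdot\mathbf u ,
\]
the $\mathbf u$-component of the momentum of the balls whose centers lie in $\{x\cdot\mathbf u\ge a\}$ at time $t$. Then $P_{\mathbf u,a}$ is nondecreasing. Between collisions and between the times at which a ball center crosses the hyperplane $\{x\cdot\mathbf u=a\}$, both the index set and the velocities are constant, so $P_{\mathbf u,a}$ is constant. A collision of two balls both in the half-space (or both outside) leaves $P_{\mathbf u,a}$ unchanged, by momentum conservation of the pair. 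At a collision of $B_i$ inside with $B_j$ outside, $P_{\mathbf u,a}$ changes by $m_i\bigl(v^i(t)-v^i(t-)\bigr)\cdot\mathbf u$; the impulse on $B_i$ points along $e_P=x^i(t)-x^j(t)$, and $x^i(t)\cdot\mathbf u\ge a\ge x^j(t)\cdot\mathbf u$ forces $e_P\cdot\mathbf u\ge0$, so this jump is $\ge0$ (compare the jump facts recorded above). Finally, a ball crossing the hyperplane into the half-space contributes a term with $v\cdot\mathbf u>0$, and one leaving removes a term with $v\cdot\mathbf u<0$; either way $P_{\mathbf u,a}$ jumps up. (Coincidences, e.g.\ a collision while one of its balls sits on the hyperplane, are avoided for a.e.\ $a$.) Because the speeds are bounded, $P_{\mathbf u,a}$ is bounded, hence converges as $t\to\infty$ and the sum of its (finitely many per bounded interval, all nonnegative) jumps is finite. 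The same holds with $-\mathbf u$ in place of $\mathbf u$ and, by an identical computation, for the momenta of the ``top $j$ balls'' in the ordering of the centers along $\mathbf u$.

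It remains to extract a permanent split. Fix a generic direction $\mathbf u$ and let $g(t)=\max_k x^k(t)\cdot\mathbf u$. Off the discrete times at which a collision occurs or the leading ball changes, $g$ is affine with slope equal to the $\mathbf u$-velocity of the leading ball, and that slope can only jump up at those times: when the leader collides it receives a positive $\mathbf u$-impulse, since the impulse points from the other ball toward the leader; and leadership can pass only to a ball whose $\mathbf u$-velocity is at least as large. Hence $g$ is convex; since $\sup_k|v^k|\le\sqrt{2E/m_{\text{min}}}$, $g$ grows at most linearly, so its slope tends to a finite limit. (One also notes $I(t):=\sum_k m_k|x^k(t)|^2$ has $I''=4E$ off collisions and $I'$ jumps up at collisions, so $I(t)\to\infty$: the balls do move apart.) The plan is now to show that for a suitable $\mathbf u$ the leading ball is, from some time on, more than $2\rmax$ ahead of every other ball in the direction $\mathbf u$; then it can touch no other ball thereafter, and the leading ball alone versus the rest is the desired partition. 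This is the precise content of ``the balls become ordered according to their velocities'': a ball that has fallen behind in direction $\mathbf u$ and is no faster in that direction cannot catch the leader, and each near-front ``catching-up'' collision effects a rearrangement that, by the boundedness of the monotone quantities above, can occur only finitely often. Making this rigorous — excluding an infinite cascade of leadership changes and near-front collisions while the front velocities creep toward their common limit — is the heart of the matter and the main obstacle, and it is here that, as the introduction cautions, the rigorous argument departs from the informal one.
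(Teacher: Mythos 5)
Your overall scheme (induction on $n$ via a permanent two-way split, powered by the monotonicity of the half-space momentum $P_{\mathbf u,a}$ and of the momenta of the top-$j$ balls in the ordering along $\mathbf u$) is exactly the idea the paper uses, and your jump computations for $P_{\mathbf u,a}$ are correct. But the proof is not complete: the step you yourself flag as ``the heart of the matter and the main obstacle'' --- converting the bounded monotone functionals into an actual permanent spatial separation --- is precisely the content of the theorem, and you have not supplied it. Worse, the specific extraction you propose cannot work as stated: isolating \emph{the single leading ball} in some direction $\mathbf u$ fails whenever two or more balls end up with identical asymptotic velocities (e.g.\ two balls that stop interacting but travel forever as a rigid pair with centers $2\rmax$ apart; then in no direction does either one get more than $2\rmax$ ahead of the other). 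The correct split is in general between two \emph{groups} of balls, not between the leader and the rest, and convexity of $g(t)=\max_k x^k(t)\cdot\mathbf u$ alone does not produce it.

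The missing deduction, which is how the paper closes the argument, runs as follows. Let $w^1_j(t)\le\cdots$ denote the $j$-th velocity components of the balls listed in increasing order of their $j$-th center coordinates. Your monotone ``top-$j$ momenta'' say that each partial sum $F^r(t)=\sum_{k\le r}w^k_j(t)$ is monotone and bounded (by energy conservation), so by induction on $r$ each individual ordered component $w^k_j(t)$ converges as $t\to\infty$; the limits are necessarily non-decreasing in $k$ (otherwise the positional ordering would be violated for large $t$). Zero total momentum forces $w^1_j(\infty)\le 0\le w^n_j(\infty)$, and strictly positive conserved energy forbids all limits from vanishing in every coordinate direction $j$; hence for some $j$ and $k$ there is a strict gap $w^k_j(\infty)<w^{k+1}_j(\infty)$. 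That gap makes the bottom $k$ and top $n-k$ balls (in the $e_j$-ordering) drift apart linearly, so after a finite time no ball of the first group can touch one of the second --- this is the partition $\calA,\calB$ your induction needs. One further point worth handling rather than waving at: the ordered components $w^k_j$ can jump not only at collisions but at ties in the ordering (your ``leadership changes''), and one must check that $F^r$ is still monotone across such ties; the paper does this by a short time-reversibility argument, and your a.e.-$a$ genericity remark does not substitute for it in the ordered-sum version you actually need.
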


\begin{proof}
If the total energy of the balls is zero, the balls are not moving and there will be no collisions. So let us assume that the total energy of the balls is strictly positive.

\emph{Step 1}.
 We will consider $n\geq2$ hard balls $B_k$ in $\R^d$, for $d\geq 1$, colliding elastically. The evolution will occur over the time interval $[0, \infty)$. 
The center and velocity of the $k$-th ball will be denoted $x^k(t)$ and $v^k(t)$, for $k=1,2,\dots,n$. We will  write $x^k(t) = (x^k_1(t), x^k_2(t), \dots, x^k_d(t))$ and 
$v^k(t) = (v^k_1(t), v^k_2(t), \dots, v^k_d(t))$.

We  define  ``order statistics'' $(y^1_j(t), y^2_j(t), \dots, y^n_j(t))$ for the centers of the balls in the direction of the $j$-th basis vector $e_j$ as the unique rearrangement of the numbers in the sequence $(x^1_j(t), x^2_j(t), \dots, x^n_j(t))$ such that $y^1_j(t)\leq y^2_j(t)\leq \dots \leq y^n_j(t)$. 

Fix some $j\in\{1,\dots,d\}$.
Let $\calT$ be the set of all times $t\geq 0$ such that there is a tie among $y^k_j(t)$'s, i.e., $y^k_j(t) = y^{k+1}_j(t)= \dots = y^r_j(t)$ for some $k$  and $r>k$.

The following is an implicit definition of $m(k,t)$.
 Let $B'_k(t)$ be the ball  $B_{m(k,t)}$ such that $x^{m(k,t)}_j(t) = y^k_j(t)$. If $t\in \calT$ then we label the balls $B'_k(t),\dots,B'_r(t)$ in such a way that  $m(k,t) < m(k+1,t) < \dots < m(r,t)$.

Let $w^k(t)= (w^k_1(t),\dots,w^k_d(t)) = v^{m(k,t)}(t)$, i.e., $w^k(t)$ denotes $B'_k$'s velocity at time $t$.

\bigskip
\emph{Step 2}.
We will show that 
$F^r(t) := \sum_{k=1}^r w^k_j(t)$ is a non-increasing function of $t$ for every $r=1, \dots, n$.

Suppose that $(t_1, t_2) \cap \calT = \emptyset$ 
and there are no collisions in $(t_1, t_2)$.
Then for every $k$, the function $t\to m(k,t)$ is constant on $(t_1, t_2)$ and so is the function $w^k_j(t)$. Hence $t\to \sum_{k=1}^r w^k_j(t)$ is
constant on $(t_1, t_2)$ for every $r$.

Consider a $t\in \calT$ such that no balls collide at $t$. First suppose that there are only two balls $B'_k(t)$ and $B'_{k+1}(t)$ such that $y^k_j(t) = y^{k+1}_j(t)$. Then there is $\eps>0$ such that there are no collisions in $(t,t+\eps)$ and $y^r_j(t) < y^{r+1}_j(t)$
for all $r\ne k$. If $w^k_j(t) \leq w^{k+1}_j(t)$ and since these velocities are not going to change in $(t,t+\eps)$, we will have $x^{m(k+1,t)}_j(s) \geq x^{m(k,t)}_j(s)$ for $s\in(t,t+\eps)$. Hence, we will have $w^k_j(s) \leq w^{k+1}_j(s)$ for $s\in(t,t+\eps)$. Similarly,  if $w^k_j(t) \geq w^{k+1}_j(t)$ then $x^{m(k+1,t)}_j(s) \leq x^{m(k,t)}_j(s)$ for $s\in(t,t+\eps)$. Once again, $w^k_j(s) \leq w^{k+1}_j(s)$ for $s\in(t,t+\eps)$.

We now use  time reversibility of the evolution to claim that $w^k_j(s) \geq w^{k+1}_j(s)$ for $s\in(t,t-\eps_1)$, for some $\eps_1>0$. Combining this with the claim that $w^k_j(s) \leq w^{k+1}_j(s)$ for $s\in(t,t+\eps)$, we conclude that $w_j^k(t+) = \min \left\{ w_j^k(t-), w^{k+1}(t-) \right\}$, and $w_j^{k+1}(t+) = \max \left\{ w_j^k(t-), w^{k+1}(t-) \right\}$. This shows that for $r\neq k$, the function $F^r(\cdot)$ is constant in a neighborhood of $t$, and that $F^k(
\cdot)$ is non-increasing on  $(t-\eps_1, t+\eps)$ for some $\eps,\eps_1>0$.

\bigskip
\emph{Step 3}.
Consider a $t\notin \calT$ such that some balls collide at $t$. First suppose that there are only two balls $B'_k(t)$ and $B'_r(t)$ that collide at time $t$. If $r=k+1$ and $y^k_j(t) = y^{k+1}_j(t)$ then the tangent plane $P$ at the collision point contains the basis vector $e_j$. The $j$-th components of the velocities of the two balls will not change at the moment of the collision and so the argument given in Step 2 applies and yields the same conclusion.

Next suppose that $y^k_j(t) < y^{r}_j(t)$. Let $w^k(t-) = \wt w^k(t-) + \wh w^k(t-)$, where $\wt w^k(t-) $ is orthogonal to $P$
and $\wh w^k(t-)$ is parallel to $P$.
We decompose $w^r(t-)$ in an analogous way as $w^r(t-) = \wt w^r(t-) + \wh w^r(t-)$. Then the components $\wh w^k(t-)$ and $\wh w^r(t-)$ of the velocities will not change at the time $t$ of the collision. 
Let $e_P$ be the unit vector orthogonal to $P$  such that $e_P \cdot e_j>0$.
For the collision to take place we must have $(\wt w^k(t-)-\wt w^r(t-))\cdot e_P > 0$ because we assumed that $y^k_j(t) < y^{r}_j(t)$ and, therefore, $(y^k_j(t) - y^{r}_j(t))\cdot e_P <0$. At the time of the collision  $\wt w^k(t)\cdot e_P$ will have a negative jump and  $\wt w^r(t)\cdot e_P$ will will have a positive jump, so the same applies to  $\wt w^k(t)\cdot e_j$  and  $\wt w^r(t)\cdot e_j$. 
This implies that $F^\ell(t)= \sum_{i=1}^\ell w^i_j(t)$ will have a non-positive jump at time $t$
for every $\ell$.

Extending the argument to the case when different pairs of balls have collisions at the same time, or the case when some collisions take place at a time $t\in \calT$, does not pose any conceptual problems so it is left to the reader.

\bigskip
\emph{Step 4}.
We have shown that 
$ \sum_{k=1}^r w^k_j(t)$ is a non-increasing function of $t$ for every $r=1, \dots, n$.

We apply the claim with $r=1$ to see that $ t\to w^1_j(t)$ is non-increasing. Hence, $w^1_j(\infty) := \lim_{t\to\infty} w^1_j(t)$ exists. The limit must be finite because all speeds are bounded since the energy of the system is constant.

Suppose that, for some $r< n$, we have shown that $w^k_j(\infty) := \lim_{t\to\infty} w^k_j(t)$ exists and is finite for  $k=1, \dots, r$. This and the fact that $ t\to\sum_{k=1}^{r+1} w^k_j(t)$ is  non-increasing imply that $w^{r+1}_j(\infty) := \lim_{t\to\infty} w^{r+1}_j(t)$ exists. The limit is finite for the same reason as in the case of $w^1_j(\infty)$.
By induction, we conclude that $w^k_j(\infty) := \lim_{t\to\infty} w^k_j(t)$ exists and is finite for all $k=1, \dots,n$.

We will argue that $w^k_j(\infty) \leq w^{k+1}_j(\infty)$ for all $k=1,\dots, n-1$. Suppose otherwise. Let $k$ be such that $w^k_j(\infty) > w^{k+1}_j(\infty)$.
The functions $t \to m(r,t)$ are not necessarily continuous but, despite that, the functions $t \to x^{m(r,t)}_j(t)$ are. The derivative of $t \to x^{m(r,t)}_j(t)$ exists and is equal to $w^r_j(t)$ for all except a countable number of $t$. The assumption that $w^k_j(\infty) > w^{k+1}_j(\infty)$ implies that, for some $\eps>0$,  $w^k_j(t) > w^{k+1}_j(t)+\eps$ for large $t$ and, therefore, $x^{m(k,t)}_j(t) > x^{m(k+1,t)}_j(t)+1$ for large $t$. This contradicts the definition of $x^{m(r,t)}_j(t)$ so the claim that $w^k_j(\infty) \leq w^{k+1}_j(\infty)$ for all $k=1,\dots, n-1$ has been proved.

Recall that we have assumed that the total momentum is zero and, therefore, the center of mass is not moving. This implies that we cannot have $ w^1_j(\infty) >0$ because then we would have $w^k_j(t) >0$ for all $k$ and sufficiently large $t$, contradicting the assumption that the center of mass is not moving. For a similar reason, we must have $w^n_j(\infty) \geq 0$.

It is possible that 
\begin{align}\label{d12.5}
w^1_j(\infty) =w^2_j(\infty) =\dots =w^n_j(\infty) = 0.
\end{align}
 For example, the centers of balls may move in the hyperplane orthogonal to $e_j$. But \eqref{d12.5} cannot hold for all $j=1,\dots,d$ simultaneously because that would imply that for all $j=1,\dots,d$,  every $\eps>0$, every $k=1,\dots, n$ and all sufficiently large $t$, we would have $|w^k_j(t)| < \eps$. This would contradict the assumption that the total energy is constant and strictly greater than zero.

We see that there must exist $j$ such that $w^1_j(\infty)  <w^n_j(\infty)$.
Thus for some $k$ and $j$, $w^k_j(\infty)  <w^{k+1}_j(\infty)$.

\bigskip
\emph{Step 5}.
We have shown that there exist $j,k$ and $\delta>0$ such that $w^k_j(\infty)+3\delta  <w^{k+1}_j(\infty)$. It follows that there exist $a\in \R$ and $s<\infty$ such that $w^r_j(t) +\delta <a < w^\ell_j(t)-\delta$ for all $r\leq k$, $\ell \geq k+1$ and $t\geq s$. 
We can and do change the inertial frame of reference so that $a=0$ and, hence,   $w^r_j(t) +\delta <0 < w^\ell_j(t)-\delta$ for all $r\leq k$, $\ell \geq k+1$ and $t\geq s$. 

Let $a_1, a_2>0$ be larger than the maximum of the radii of the balls.
Recall that the functions $t \to x^{m(i,t)}_j(t)$ are continuous
 for all $i$ and $j$. Moreover, their derivatives are equal to $w^i_j(t)$ for all except a countable number of $t$. It follows that for some $s_1< \infty$, we will have $x^{m(r,t)}_j(t)< - 2a_1$
and $x^{m(\ell,t)}_j(t)> 2a_2$ for all $r\leq k$, $\ell \geq k+1$ and $t\geq s_1$. 

This implies that for all $t\geq s_1$, some ball centers will satisfy $x^i_j(t)< -2a_1$, some of them will satisfy $x^i_j(t)> 2a_2$, and none of them will satisfy $-2a_1<x^i_j(t)< 2a_2$. The balls move continuously, so none of the balls $B_i$ with $x^i_j(s_1) <-2a_1$ will ever collide with a ball $B_\ell$ with $x^\ell_j(s_1) >2a_2$ after time $s_1$. Hence,  the original family of balls can be decomposed into two non-empty collections of balls with the property that no ball from one subfamily will collide with a ball in the other subfamily after time $s_1$.

\bigskip
\emph{Step 6}.
 Since the original family of balls can be decomposed into two non-empty collections of balls with the property that no ball from one subfamily will collide with a ball in the other subfamily after some time, the same reasoning can be applied to each of the subfamilies. Proceeding by induction, we will find a time $s_*<\infty$ such that we can decompose the family of $n$ balls into $n$ subfamilies of balls, each one containing only one ball, with the  property that no ball from one subfamily collides with a ball in the other subfamily after time $s_*$. Recall from the introduction to this section that there are no accumulation points in $[0,s_*]$ for the collision times. It follows that the number of collisions must be finite.
\end{proof}

\section{Upper bound: assumptions and notation}
\label{se:notation}

The following notation and assumptions will remain in force for the rest of the paper.

We will consider $n\geq 3$ hard balls in $\R^d$, for $d\geq 2$, colliding elastically, on the time interval $(-\infty, \infty)$.
If there are only two balls, they can collide at most once.

A crucial difference between Section \ref{newproof} and the remaining part of the paper is that from now on we will assume that the balls have equal masses and their radii are 1.

The center and velocity of the $k$-th ball will be denoted $x^k(t)$ and $v^k(t)$, for $k=1,2,\dots,n$. We will say that the $j$-th and $k$-th balls collide at time $t$ if $|x^j(t) - x^k(t)| = 2$ and their velocities change at this time.

The velocities are constant between collision times. The norm of velocity will be called speed (as is done in physics). 
We will write $\bx(t) = (x^1(t), \dots, x^n(t)) \in \R^{dn}$
and
$\bv(t) = (v^1(t), \dots, v^n(t))\in\R^{dn}$. Note that $\bv(t)$ is well defined only when $t$ is not a collision time, but both $\bv(t-)$ and $\bv(t+)$ are well defined for all times. 

Recall that all balls have the same mass. This implies that the velocities change at the moment of collision as follows. Suppose that the $j$-th and $k$-th balls collide at time $t$. 
This implies that the velocities $v^j(t-) $ and $ v^k(t-)$ (i.e., the velocities just before the collision) satisfy
\begin{align}\label{oc4.3}
(v^j(t-) - v^k(t-)) \cdot (x^j(t) - x^k(t)) < 0.
\end{align}
Let $x^{jk}(t) = (x^j(t) - x^k(t))/|x^j(t) - x^k(t)|$.
Then the velocities just after the collision are given by
\begin{align}\label{oc2.3}
v^j(t+) &= v^j(t-) + (v^k(t-) \cdot x^{jk}(t)) x^{jk}(t)
- (v^j(t-) \cdot x^{jk}(t)) x^{jk}(t),\\
v^k(t+) &= v^k(t-) + (v^j(t-) \cdot x^{jk}(t)) x^{jk}(t)
- (v^k(t-) \cdot x^{jk}(t)) x^{jk}(t).\label{oc4.2}
\end{align}
In other words, the balls exchange the components of their velocities that are parallel to the line through their centers at the moment of impact. The other components of velocities remain unchanged. It easily follows form \eqref{oc2.3}, \eqref{oc4.2}, and \eqref{oc4.3} that
\begin{align}
\label{oc3.5}
\bx(t)\cdot(\bv(t+)-\bv(t-)) &= - (v^j(t-) - v^k(t-)) \cdot (x^j(t) - x^k(t)) >0,
\end{align}
that is, $\bx(t)\cdot\bv(t+)>\bx(t)\cdot\bv(t-)$ when there is a collision at time $t$. 

\bigskip

We will make the following assumptions.

(A1)
The balls have equal masses and all radii are equal to 1.

(A2)
We will assume that there are no simultaneous collisions. See the beginning of Section \ref{newproof} for the discussion of this assumption.

(A3)  We will assume  that the momentum of the system is zero, i.e., $\sum_{j=1}^n v^j(t) =0$ for all $t$. We can make this assumption because the number of collisions is the same in all inertial frames of reference. Since the total momentum is zero, the center of mass of all balls is constant, so it can and will be assumed to be at the origin. This, together with the fact that all masses are equal, implies that $\sum_{j=1}^n x^j(t) =0$.  

(A4) We will assume without loss of generality that the total  ``energy'' is equal to 1, i.e., $|\bv(t+)|^2 =1$ for all $t$. If the total initial energy is equal to zero then the balls are not moving and there will be no collisions. If the initial energy is not zero then we can multiply all velocity vectors  by the same scalar constant so that the energy is equal to 1.  If all velocities are changed by the same multiplicative constant then the balls will follow the same trajectories at a different rate and hence there will be the same total number of collisions.

(A5) The problem of the number of collisions is invariant under time shifts. In Remark \ref{re:zero}, we will choose a specific time in the evolution process to play the role of time 0.

\section{Functionals of motion}
\label{se:geometry}

In this section we analyze  $\bx(t)$ and 
$\bv(t)$. Some of our proofs are based on ideas originally developed in \cite{Vaser79,Illner89, Illner90}.

\begin{definition}
For $u\in \R$, let
\begin{align*}%\label{de:x_u}
\bx_u(t) = \begin{cases}
\bx(t) & \text{  for  } t< u, \\
\bx(u) + (t-u)\bv(u+) & \text{  for  } t\geq u,
\end{cases}
\end{align*}
and let $\bv_u(t+)$ be the right derivative of $\bx_u(t)$.
\end{definition}

Let $T_C$ denote the set of collision times. We will use $\angle$ to denote the unsigned angle between vectors, i.e., 
\begin{align*}
\angle(w_1,w_2)
=\arccos \left(\frac{w_1}{|w_1|} \cdot \frac{w_2}{|w_2|}\right) \in[0,\pi]
\end{align*}
for any non-zero vectors $w_1, w_2$.

\begin{lemma}
\label{le:angle_dwn}
Consider any $u\in\R$. 
The function $\alpha(t):= \angle(\bx_u(t),\bv_u(t+))$ is strictly decreasing.
The function $t\to \angle(\bx(t),\bv(t+))$ is  also strictly decreasing.
\end{lemma}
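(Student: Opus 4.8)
### Proof proposal

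The plan is to analyze how the quantity $\bx_u(t)\cdot\bv_u(t+)$ and the norms $|\bx_u(t)|$, $|\bv_u(t+)|$ evolve, since $\cos\alpha(t) = \bx_u(t)\cdot\bv_u(t+)/(|\bx_u(t)|\,|\bv_u(t+)|)$. First I would observe that for $t \geq u$ the velocity $\bv_u(t+) = \bv(u+)$ is constant, and $\bx_u(t) = \bx(u) + (t-u)\bv(u+)$ moves along a straight line with fixed direction vector; a direct computation shows that for a point moving linearly as $p + s q$ with $q\neq 0$, the angle $\angle(p+sq, q)$ is strictly decreasing in $s$ (the tangential component grows linearly while the orthogonal component stays fixed, so $\cos$ increases), unless $p$ is parallel to $q$, in which case the angle is constantly $0$ or $\pi$. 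I would need to rule out the degenerate parallel case and the case $\bv(u+)=0$ — but if $\bv(u+)=0$ there is no motion and nothing collides, and if $\bx(u)\parallel\bv(u+)$ one checks (using the zero-momentum / zero-center-of-mass assumption (A3) and strict positivity of the energy (A4)) that this cannot persist, or handle it as a limiting case. So on $[u,\infty)$ strict monotonicity is essentially a one-variable calculus fact.

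The substance is the region $t < u$, where $\bx_u(t) = \bx(t)$ and $\bv_u(t+) = \bv(t+)$ are the genuine dynamics. Between collisions $\bv(t+)$ is constant and $\bx(t)$ again moves linearly, so the same one-variable computation applies and $\alpha$ is strictly decreasing on each inter-collision interval. The key point is to check that $\alpha$ does not jump \emph{up} at a collision time $t\in T_C$. At such a time $|\bx(t)|$ and $|\bv(t+)|=1$ are continuous/constant (energy is conserved and positions are continuous), so the sign of the jump of $\cos\alpha$ is the sign of the jump of $\bx(t)\cdot\bv(t+)$, and by \eqref{oc3.5} we have $\bx(t)\cdot\bv(t+) > \bx(t)\cdot\bv(t-)$, i.e.\ $\cos\alpha$ jumps \emph{up}, so $\alpha$ jumps \emph{down}. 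Combining the strict decrease on open intervals with the downward jumps at collisions (and using that collision times have no accumulation point, by Theorem \ref{d10.2}, so "piecewise strictly decreasing with downward jumps" genuinely gives strictly decreasing globally) yields that $\alpha$ is strictly decreasing on all of $\R$. One also has to check continuity/behavior at $t=u$ itself: $\bx_u$ is continuous there and $\bv_u(t+)$ is right-continuous, and $\bv_u(u+)=\bv(u+)$, so the left-regime and right-regime match up and strict monotonicity carries across $u$; if $u$ happens to be a collision time the jump at $u$ is still downward by the same argument.

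The second assertion, that $t\mapsto\angle(\bx(t),\bv(t+))$ is strictly decreasing, is then immediate: it is exactly the function $\alpha$ associated to the choice $u=t$ at each instant... more precisely, for any fixed $t_0$, on $(-\infty, t_0]$ the function $\bx_{t_0}$ agrees with $\bx$ and $\bv_{t_0}(\cdot+)$ agrees with $\bv(\cdot+)$, so $\angle(\bx(t),\bv(t+)) = \alpha_{t_0}(t)$ for $t\le t_0$, and we have just shown $\alpha_{t_0}$ is strictly decreasing; letting $t_0$ range over $\R$ gives the claim on all of $\R$.

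I expect the main obstacle to be bookkeeping rather than depth: carefully handling the degenerate cases (a velocity vector that is zero, or a position vector momentarily parallel to the velocity vector, which would make the angle locally constant at $0$ or $\pi$ rather than strictly decreasing) and making sure the "piecewise strictly decreasing plus downward jumps" argument is airtight given that we only know collision times do not accumulate on compact intervals. For the parallel case I would argue that if $\bx(t)$ and $\bv(t+)$ are positively parallel on a nondegenerate interval then $|\bx(t)|$ grows without bound linearly, which is fine, but $\cos\alpha\equiv 1$ would contradict strict decrease — so I must show this cannot happen, e.g.\ because $\angle(\bx,\bv(+))=0$ would force, via \eqref{oc3.5} and the no-accumulation of collisions, a contradiction with $\sum_j x^j(t)=0$ together with boundedness of the mutual distances being violated; alternatively one shows directly that at the first collision after such an interval the angle strictly drops below the value it would need, so the interval cannot extend to the next collision. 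This edge-case analysis, while elementary, is where the care is needed.
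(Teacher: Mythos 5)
Your proposal is correct and takes essentially the same approach as the paper: strict decrease of the angle along straight-line motion (your tangential/orthogonal-component computation is the paper's exterior-angle-of-a-triangle argument), a downward jump at each collision obtained from \eqref{oc3.5} together with $|\bv(t+)|=|\bv(t-)|$ and continuity of $\bx$, and assembly of the pieces using non-accumulation of collision times (which the paper phrases as an induction on the number of collisions in $(s,t)$, and your handling of the second claim via $\alpha_{t_0}$ on $(-\infty,t_0]$ is a trivial variant of the paper's ``take $u$ past the last collision''). The degenerate case you flag ($\bx_u(t)$ positively parallel to $\bv_u(t+)$, where the angle is constant at $0$ rather than strictly decreasing) is a real edge case, but the paper's own proof silently ignores it as well, and only the non-strict monotonicity is used in the sequel.
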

\begin{proof}
Let $s<t$ be times in $\R\setminus T_C$. We will show that $\alpha(s)>\alpha(t)$ by induction on the number of collisions in $(s,t)$. Assume there are no collisions in $[s-\e,t+\e]$ for some $\e>0$. It follows that $\bv_u(r)=\bv_u(s)$  for all $r\in[s,t]$. 
Consider the triangle with vertices $0$, $\bx_u(s)$ and $\bx_u(t)$. It is easy to see that $\angle (\bx_u(s) - 0 , \bx_u(t) - \bx_u(s))
> \angle (\bx_u(t) - 0 , \bx_u(t) - \bx_u(s))$. This implies that
\begin{align*}
\alpha(s) 
&=\angle(\bx_u(s),\bv_u(s)) 
= \angle(\bx_u(s)-0,(t-s)\bv_u(s)) 
= \angle (\bx_u(s) - 0 , \bx_u(t) - \bx_u(s))\\
&> \angle (\bx_u(t) - 0 , \bx_u(t) - \bx_u(s))
=\angle(\bx_u(t),(t-s)\bv_u(t)) 
=\angle(\bx_u(t),\bv_u(t))\\
& =\alpha(t).
\end{align*}

Assume that the claim is true when there are $j$  collisions in $(s,t)$. If there are $j+1$ collisions, and $t_*$ is the first collision in $(s,t)$, then, by the first part of the proof, $\alpha(t_*-)\leq\alpha(s)$. Formula \eqref{oc3.5} implies that $\alpha(t_*+)<\alpha(t_*-)$. It follows from this and the induction assumption  that $\alpha(t_*-)>\alpha(t_*+)\geq\alpha(t)$. We conclude that $\alpha(s)>\alpha(t)$.

Suppose that $s<t$ are arbitrary, i.e., $s,t\in\R$, and they may belong to $ T_C$.
Then the inequality  $\alpha(s) >\alpha(t)$ follows from the previous case and right continuity of $\alpha(\cdot)$. 

If  $u$ is greater than the time of the last collision then  $\bx_{u}\equiv\bx$, and the second claim of the lemma follows from the first one.
\end{proof}

\begin{remark}
\label{re:zero}
Let $\ell$ be the time of the last collision. For $t>\ell$ we have $\bx(t)\cdot\bv(t) = \bx(\ell)\cdot\bv(\ell+) + (t-\ell)$, from which it easily follows that $\lim_{t\to\infty} \alpha(t)=0$. By time reversal, we have $\lim_{t\to -\infty} \alpha(t)= \pi$. This, and the previous lemma show that there is a unique $t_0\in\R$ such that $\alpha(t) >\pi/2$ for $t<t_0$, and $\alpha(t)<\pi/2$ for $t>t_0$. Right continuity yields $\alpha(t_0)\leq\pi/2$. Since the total number of collisions of a family of balls is invariant under time translation, we can and will assume from now on that $t_0=0$.
\end{remark}

\begin{lemma}
\label{le:angle_cut}
For all times $0\leq u < w$ and $t\in\R$ we have,
\begin{align}
\label{eq:angle_cut} \angle\left( \bx_w(t),\bv_w(t+)\right) 
&\leq \angle\left( \bx_u(t),\bv_u(t+)\right),\\
\angle\left( \bx(t),\bv(t+)\right) 
&\leq \angle\left( \bx_u(t),\bv_u(t+)\right).\label{d11.1}
\end{align}
\end{lemma}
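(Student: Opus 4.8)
\textbf{Proof proposal for Lemma~\ref{le:angle_cut}.}

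The plan is to reduce both inequalities to a single comparison at the cut time $w$ (resp.\ $u$), and then propagate that comparison forward and backward in time using the monotonicity already established in Lemma~\ref{le:angle_dwn}. First I would observe that $\bx_w$ and $\bx_u$ agree on $(-\infty, u)$, so \eqref{eq:angle_cut} is trivial for $t < u$, and likewise the only content of \eqref{eq:angle_cut} is for $t \geq u$. Since $u < w$, on the interval $[u,w)$ the path $\bx_u$ is the straight ray $\bx(u) + (t-u)\bv(u+)$ with constant velocity, whereas $\bx_w$ still coincides with the true trajectory $\bx$, which undergoes collisions in $(u,w)$. By Lemma~\ref{le:angle_dwn} applied to the frozen path $\bx_u$ (which has no collisions after $u$), the angle $\angle(\bx_u(t),\bv_u(t+))$ is constant for $t \geq u$; call this constant $\beta$. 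On the other hand $\angle(\bx_w(t),\bv_w(t+)) = \angle(\bx(t),\bv(t+))$ for $t \in [u,w]$, and by Lemma~\ref{le:angle_dwn} this is (weakly) decreasing on $[u,w]$; at $t=u$ it equals $\angle(\bx(u),\bv(u+)) = \angle(\bx_u(u),\bv_u(u+)) = \beta$. Hence $\angle(\bx_w(w),\bv_w(w+)) \leq \beta$.

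Next I would push this past time $w$. For $t \geq w$ both paths $\bx_w$ and $\bx_u$ are straight rays with no further collisions, so Lemma~\ref{le:angle_dwn} says each of the two angles is constant for $t \geq w$. Therefore for all $t \geq w$,
\begin{align*}
\angle(\bx_w(t),\bv_w(t+)) = \angle(\bx_w(w),\bv_w(w+)) \leq \beta = \angle(\bx_u(t),\bv_u(t+)),
\end{align*}
and for $u \leq t < w$ we already have $\angle(\bx_w(t),\bv_w(t+)) = \angle(\bx(t),\bv(t+)) \leq \beta$ by monotonicity on $[u,w]$. Combined with the trivial case $t < u$, this proves \eqref{eq:angle_cut}.

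For \eqref{d11.1} I would argue the same way, now comparing the true trajectory $\bx$ with the frozen path $\bx_u$. For $t < u$ they coincide. For $t \geq u$, $\angle(\bx_u(t),\bv_u(t+)) \equiv \beta = \angle(\bx(u),\bv(u+))$ as above, while $t \mapsto \angle(\bx(t),\bv(t+))$ is decreasing by the second statement of Lemma~\ref{le:angle_dwn} and equals $\beta$ at $t = u$; hence $\angle(\bx(t),\bv(t+)) \leq \beta$ for all $t \geq u$. This gives \eqref{d11.1}. (Alternatively, \eqref{d11.1} is the limiting case $w \to \infty$ of \eqref{eq:angle_cut}, since $\bx_w \to \bx$ once $w$ exceeds the last collision time.)

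The only place requiring care is the comparison at the cut: one must check that $\bx(u) = \bx_u(u)$ and $\bv(u+) = \bv_u(u+)$ so that the two angle functions genuinely start from the same value $\beta$ at time $u$ — this is immediate from the definition of $\bx_u$ — and that when $u$ is itself a collision time the right-hand velocities are the ones being used, which is exactly how $\bv_u(t+)$ was defined. The rest is just transitivity of the inequalities across the three regimes $t<u$, $u\le t<w$, $t\ge w$, together with the constancy of the angle along collision-free straight rays, which is a one-line consequence of Lemma~\ref{le:angle_dwn}. I do not anticipate any genuine obstacle here; the lemma is essentially a bookkeeping corollary of the monotonicity lemma.
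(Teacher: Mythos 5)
There is a genuine gap, and it is located exactly where you say the argument is "a one-line consequence" of Lemma~\ref{le:angle_dwn}: the claim that $\angle(\bx_u(t),\bv_u(t+))$ is \emph{constant} for $t\geq u$ along the collision-free ray $\bx_u$ is false. The angle in question is measured between the position vector $\bx_u(t)$ (based at the origin of $\R^{dn}$) and the velocity, not between two velocities; as the point travels along a straight ray, its position vector rotates toward the direction of motion, so the angle strictly \emph{decreases} (to $0$ as $t\to\infty$). This is precisely the content of the first, collision-free step of the proof of Lemma~\ref{le:angle_dwn}, so your reading of that lemma contradicts its statement. Once the false constancy is removed, your propagation step collapses: knowing $\angle(\bx_w(w),\bv_w(w+))\leq\angle(\bx_u(w),\bv_u(w+))$ at the single time $w$ does not imply the same inequality for $t>w$, because two decreasing angle functions based at \emph{different} points can cross. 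Concretely, with $\cos\alpha(t)=\bigl(\lambda+s\bigr)/\sqrt{a^2+2s\lambda+s^2}$, $s=t-w$, the parameters $(a,\lambda)=(10,6)$ and $(1,0.5)$ satisfy the inequality at $s=0$ but reverse it at $s=1$. (A further warning sign: your argument never uses the hypothesis $u\geq 0$, which the lemma needs.)

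The paper avoids this by inducting on the number of collisions in $(u,w]$ and peeling them off one at a time. The point of that reduction is that at a collision time $t_1$ the two paths being compared, $\bx_{t_1}$ and $\bx_u$ restricted to $t \ge t_1$, emanate from the \emph{same} point $\bx(t_1)$ and differ only in their velocities $\bv(t_1+)$ versus $\bv(t_1-)$. Both angles are then values of the single function
\begin{align*}
\beta(\lambda)=\arccos\left(\frac{\lambda+(t-t_1)}{\sqrt{|\bx(t_1)|^2+2(t-t_1)\lambda+(t-t_1)^2}}\right)
\end{align*}
at $\lambda=\bx(t_1)\cdot\bv(t_1+)$ and $\lambda=\bx(t_1)\cdot\bv(t_1-)$ respectively; $\beta$ is decreasing in $\lambda\ge 0$ (here $u\ge 0$ and Remark~\ref{re:zero} guarantee nonnegativity), and \eqref{oc3.5} says the dot product jumps up at a collision. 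That gives the inequality for \emph{all} $t\geq t_1$ simultaneously, which is what a pointwise comparison at one time cannot deliver. You would need to restructure your argument along these lines.
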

\begin{proof}

We will proceed by induction on the number of collisions in the interval $(u,w]$. If no collisions occur in this interval then $\bx_w(t)=\bx_u(t)$ for all $t\in\R$, and \eqref{eq:angle_cut} is obviously true.

Assume that \eqref{eq:angle_cut} is true when $k$ collisions occur in $(u,w]$. If there are $k+1$ collisions in $(u,w]$, say, at times $u<t_1<t_2 <\cdots<t_{k+1}\leq w$, then there are  only $k$ collisions  in $(t_1,w]$ so, by the induction assumption, for all $t$,
\begin{align*}
\angle\left(  \bx_w(t),\bv_w(t+) \right) \leq \angle\left(  \bx_{t_1}(t),\bv_{t_1}(t+) \right).
\end{align*}
To finish the proof of \eqref{eq:angle_cut}, it will suffice to show that 
\begin{align}\label{d11.2}
\angle\left(  \bx_{t_1}(t),\bv_{t_1}(t+) \right)\leq  \angle\left(  \bx_{u}(t),\bv_{u}(t+) \right)
\end{align}
for all $t$. 
For $t < t_1$ we have $\bx_{t_1}(t)=\bx_{u}(t)$, hence \eqref{d11.2} is satisfied. 

Suppose that $t\geq t_1$. Note that $\bx_{t_1}(t_1) = \bx_{u}(t_1)=\bx(t_1)$. We have
\begin{align}\label{a3.1}
 \angle\left(  \bx_{t_1}(t),\bv_{t_1}(t+) \right) &=  \angle\left(  \bx(t_1)+(t-t_1)\bv(t_1+),\bv(t_1+) \right), \\
  \angle\left(  \bx_{u}(t),\bv_{u}(t+) \right) &=  \angle\left(  \bx(t_1)+(t-t_1)\bv(t_1-),\bv(t_1-) \right).\label{a3.2}
  \end{align}
Let
\begin{align*}  
\beta(\lambda) &= \arccos\left( \frac{\lambda + (t-t_1) }{\sqrt{|\bx(t_1)|^2 +2(t-t_1)\lambda+(t-t_1)^2}} \right).
\end{align*}
It follows from \eqref{a3.1}-\eqref{a3.2} that
\begin{align}\label{a3.3}
 \angle\left(  \bx_{t_1}(t),\bv_{t_1}(t+) \right) &= \beta( \bx(t_1)\cdot\bv(t_1+) ),
 \\ 
\angle\left(  \bx_{u}(t),\bv_{u}(t+) \right) & =\beta(  \bx(t_1)\cdot\bv(t_1-) ) .\label{a3.4}
\end{align}

Differentiation of $\beta(\lambda)$ shows that it is a decreasing function of $\lambda$ for $\lambda\geq 0$ and $t\geq t_1$. By  Lemma \ref{le:angle_dwn}, Remark \ref{re:zero} and since $u\geq 0$, we have
$$
\angle(\bx(t_1),\bv(t_1+)) \leq \angle(\bx(t_1),\bv(t_1-)) \leq  \angle(\bx(u),\bv(u+)) \leq \pi/2.
$$
This and \eqref{a3.3}-\eqref{a3.4} imply that, for $t\geq t_1$,
$$
 \angle\left(  \bx_{t_1}(t),\bv_{t_1}(t+) \right) = \beta( \bx(t_1)\cdot\bv(t_1+) ) \leq \beta(  \bx(t_1)\cdot\bv(t_1-) ) =   \angle\left(  \bx_{u}(t),\bv_{u}(t+) \right).
$$
This completes the proof of \eqref{d11.2} and, consequently, proves \eqref{eq:angle_cut}.

If we take $w$ greater than the time of the last collision then  $\bx_{w}\equiv\bx$, so \eqref{d11.1} follows from \eqref{eq:angle_cut}.
\end{proof}	

\begin{lemma}
\label{le:norm_bound}
If $0\leq u<w$ then for all $t\in\R$,
\begin{align}
\label{eq:norm_bound}
|\bx_u(t)|&\leq |\bx_w(t)|,\\
|\bx_u(t)|&\leq |\bx(t)|.\label{d12.3}
\end{align}
\end{lemma}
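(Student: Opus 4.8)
The plan is to follow the same inductive scheme used in the proof of Lemma \ref{le:angle_cut}, but with the monotonicity of the angle replaced by the elementary fact that a collision strictly increases $\bx(t)\cdot\bv(t)$ while elastic collisions preserve the total energy. I would prove \eqref{eq:norm_bound} by induction on the number of collisions in $(u,w]$. If there are no collisions in $(u,w]$ then $\bx_u\equiv\bx_w$ and there is nothing to prove. Assuming the claim holds whenever $k$ collisions occur in $(u,w]$, suppose there are $k+1$ collisions, at times $u<t_1<\cdots<t_{k+1}\le w$. Only $k$ of them lie in $(t_1,w]$, so the induction hypothesis gives $|\bx_{t_1}(t)|\le|\bx_w(t)|$ for all $t\in\R$, and it remains to show $|\bx_u(t)|\le|\bx_{t_1}(t)|$ for all $t$.

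For $t<t_1$ one has $\bx_u(t)=\bx_{t_1}(t)=\bx(t)$, so the inequality is trivial. For $t\ge t_1$, note that there are no collisions in $(u,t_1)$, so the path $\bx_u$ is already moving ballistically with velocity $\bv(u+)=\bv(t_1-)$ as it passes through $\bx_u(t_1)=\bx(t_1)$; hence $\bx_u(t)=\bx(t_1)+(t-t_1)\bv(t_1-)$, whereas $\bx_{t_1}(t)=\bx(t_1)+(t-t_1)\bv(t_1+)$. Expanding the two squared norms, the terms quadratic in $(t-t_1)$ cancel because elastic collisions conserve energy, i.e.\ $|\bv(t_1-)|=|\bv(t_1+)|$, and the inequality $|\bx_u(t)|^2\le|\bx_{t_1}(t)|^2$ reduces to
\[
2(t-t_1)\,\bx(t_1)\cdot\bv(t_1-)\ \le\ 2(t-t_1)\,\bx(t_1)\cdot\bv(t_1+),
\]
which holds since $t-t_1\ge0$ and $\bx(t_1)\cdot\bv(t_1+)>\bx(t_1)\cdot\bv(t_1-)$ by \eqref{oc3.5}. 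This establishes \eqref{eq:norm_bound}. Finally, choosing $w$ larger than the time of the last collision gives $\bx_w\equiv\bx$, and \eqref{d12.3} follows from \eqref{eq:norm_bound}.

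I do not expect a real obstacle here; the only point requiring care is the bookkeeping at $t_1$ — verifying that $\bx_u$ agrees with the true trajectory up to $t_1$ and then continues straight with the pre-collision velocity $\bv(t_1-)$, so that the comparison at $t_1$ is genuinely between the pre-collision and post-collision straight-line continuations of one and the same configuration $\bx(t_1)$. Once that is pinned down, the estimate is an immediate consequence of energy conservation together with \eqref{oc3.5}, exactly mirroring the way Lemma \ref{le:angle_cut} combined Lemma \ref{le:angle_dwn} with the same formula \eqref{oc3.5}.
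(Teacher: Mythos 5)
Your proof is correct and follows essentially the same route as the paper: both reduce to comparing the pre- and post-collision straight-line continuations of $\bx(t_1)$, where the quadratic terms cancel by conservation of energy and the cross term is controlled by \eqref{oc3.5}. The only cosmetic difference is that you organize the reduction as a formal induction on the number of collisions, whereas the paper proves the single-collision case and then chains the inequalities $|\bx_u(t)|\leq|\bx_{t_1}(t)|\leq\cdots\leq|\bx_w(t)|$.
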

\begin{proof}
 If there are no collisions in $(u,w]$ then $\bx_u(t) = \bx_w(t)$ for all $t\in\R$. Assume that there is exactly one collision  in $(u,w]$, say, at time $t_1$. For $t\leq t_1$, we have  $\bx_u(t)=\bx_w(t)$. For $t>t_1$, we have
\begin{align*}
\bx_w(t) &= \bx_{t_1}(t) =\bx(t_1) + (t-t_1)\bv(t_1+), \\
\bx_u(t) & =\bx(t_1) + (t-t_1)\bv(t_1-),
 \end{align*}
 from which it follows that
 \begin{align*}
    |\bx_w(t)|^2 - |\bx_u(t)|^2 &= 2(t-t_1)\bx(t_1)\cdot( \bv(t_1+) - \bv(t_1-)).
 \end{align*}
Now \eqref{eq:norm_bound} follows  from \eqref{oc3.5}.

Let $u<t_1<t_{2}<\dots<t_{m+1}\leq w$ be all  collision times in $(u,w]$. Then, for all $t\in\R$,
\begin{align*}
|\bx_u(t)| \leq |\bx_{t_{1}}(t)| \leq |\bx_{t_{2}}(t)| \leq \cdots  \leq |\bx_{t_{m+1}}(t)| \leq |\bx_{w}(t)|.
\end{align*}

If we take $w$ greater than the time of the last collision then  $\bx_{w}\equiv\bx$, and  \eqref{d12.3} follows from \eqref{eq:norm_bound}.
\end{proof}

\begin{lemma}
\label{le:angle_x}
If $0\leq u<w$ then for all $s,t\geq u$, 
\begin{align}
\label{eq:angle_x}
\angle\left( \bx_w(s),\bx_w(t) \right) &\leq \angle\left( \bx_u(s),\bx_u(t) \right).
\end{align}
The inequality also holds when we replace $\bx_w$ by $\bx$.
\end{lemma}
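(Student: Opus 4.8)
The plan is to compare the piecewise‑linear path $\bx_w$ on $[s,t]$ with the straight ray traced by $\bx_u$ one ballistic segment at a time. Assume $s\le t$ (the case $s=t$ is trivial); and for the version with $\bx$ in place of $\bx_w$, take $w$ larger than the last collision time (which exists by Theorem~\ref{d10.2}) so that $\bx_w\equiv\bx$. So I will only treat the first inequality.

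The one genuinely new ingredient I would isolate is a monotonicity property. For $v\ge 0$ put $H_w(v):=\sqrt{|\bx_w(v)|^2-(\bx_w(v)\cdot\bv_w(v+))^2}$, which, since $|\bv_w(v+)|=1$, is exactly the distance from the origin to the line through $\bx_w(v)$ in direction $\bv_w(v+)$. I claim $v\mapsto H_w(v)$ is non‑increasing on $[0,\infty)$: it is constant between collisions of $\bx_w$ because there the velocity is constant and $\bx_w(v)\wedge\bv_w(v+)$ does not change; and at a collision time $c$ with $0<c\le w$ one has
\[
0\ \le\ \bx_w(c)\cdot\bv_w(c-)\ <\ \bx_w(c)\cdot\bv_w(c+),
\]
the strict inequality being \eqref{oc3.5} and the left one holding since $\angle(\bx_w(c),\bv_w(c-))=\lim_{r\uparrow c}\angle(\bx_w(r),\bv_w(r+))\le\pi/2$ by Lemma~\ref{le:angle_dwn} and Remark~\ref{re:zero}; as both quantities are nonnegative, squaring gives $H_w(c+)^2\le H_w(c-)^2$. (All collisions of $\bx_w$ occur at times $\le w$, and $\bx_w(v)\neq 0$ always, since disjoint unit balls cannot all sit at the origin.) Consequently, for every $v\ge u$ we get $H_w(v)\le H_w(u)=H_u(u)=:H(u)$, the middle equality because $\bx_w$ and $\bx_u$ agree with $\bx$ up to time $u$; also $H_u(v)=H(u)$ for all $v\ge u$ since $\bx_u$ is ballistic there.

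Next I would decompose: let $s=c_0<c_1<\dots<c_m<c_{m+1}=t$, where $c_1,\dots,c_m$ are the finitely many (Theorem~\ref{d10.2}) collision times of $\bx_w$ in $(s,t)$. On each $[c_j,c_{j+1}]$ the path $\bx_w$ has no collisions, so it moves at unit speed along a line $L_j$ with $\dist(0,L_j)=H_w(c_j)\le H(u)$ by the previous paragraph (note $c_j\ge s\ge u\ge 0$), while on all of $[s,t]$ the path $\bx_u$ moves at unit speed along a single line at distance exactly $H(u)$ from $0$. Writing the area identity $\sin\angle(X,Y)=|Y-X|\,d/(|X|\,|Y|)$, with $d$ the distance from $0$ to the line through $X$ and $Y$, one gets
\begin{align*}
\sin\angle\bigl(\bx_w(c_j),\bx_w(c_{j+1})\bigr)&=\frac{(c_{j+1}-c_j)\,\dist(0,L_j)}{|\bx_w(c_j)|\,|\bx_w(c_{j+1})|},\\
\sin\angle\bigl(\bx_u(c_j),\bx_u(c_{j+1})\bigr)&=\frac{(c_{j+1}-c_j)\,H(u)}{|\bx_u(c_j)|\,|\bx_u(c_{j+1})|},
\end{align*}
and since $\dist(0,L_j)\le H(u)$ and $|\bx_w(\cdot)|\ge|\bx_u(\cdot)|$ by Lemma~\ref{le:norm_bound}, while all four points lie on the outgoing halves of their rays (because $\bx_w(v)\cdot\bv_w(v+)\ge0$ and $\bx_u(v)\cdot\bv_u(v+)\ge0$ for $v\ge u\ge0$ by Remark~\ref{re:zero}, so the angles in question are $<\pi/2$), I conclude $\angle(\bx_w(c_j),\bx_w(c_{j+1}))\le\angle(\bx_u(c_j),\bx_u(c_{j+1}))$ for each $j$. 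Summing, the left sides dominate $\angle(\bx_w(s),\bx_w(t))$ by the triangle inequality for $\angle$, while the right sides add to exactly $\angle(\bx_u(s),\bx_u(t))$, since $\bx_u(c_1),\dots,\bx_u(c_m)$ lie on the segment $[\bx_u(s),\bx_u(t)]$ (and $0$ is not on that line), so the angles they subtend at $0$ add. This yields \eqref{eq:angle_x}.

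The main obstacle is that the obvious telescoping bound $\angle(\bx_w(s),\bx_w(t))\le\alpha_w(s)-\alpha_w(t)$ (with $\alpha_w(r)=\angle(\bx_w(r),\bv_w(r+))$, via Lemma~\ref{le:angle_dwn}) is too lossy: $\alpha_w-\alpha_u$ drops at each collision of $\bx_w$, hence is not monotone, so it cannot be compared to $\alpha_u(s)-\alpha_u(t)=\angle(\bx_u(s),\bx_u(t))$. The fix is the segment‑by‑segment comparison above via the sine/area formula, which reduces everything to the two monotonicities $|\bx_u|\le|\bx_w|$ (Lemma~\ref{le:norm_bound}) and the non‑increase of $H_w$. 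Establishing the latter — concretely, that the perpendicular distance from the origin to the ballistic line decreases at a collision, which rests on the chain $0\le\bx(c)\cdot\bv(c-)<\bx(c)\cdot\bv(c+)$ — is the one new estimate; the rest is bookkeeping about which line each segment of $\bx_w$ lies on and checking the subtended angles stay below $\pi/2$.
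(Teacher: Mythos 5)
Your proof is correct, but it is organized differently from the paper's. The paper proves \eqref{eq:angle_x} by induction on the number of collisions in $(u,w]$, reducing to a one‑collision comparison of $\bx_{t_1}$ with $\bx_u$ via the law of sines, and then importing the two monotonicity facts it has already established, namely Lemma \ref{le:angle_cut} (the angle to the velocity shrinks when more collisions are retained) and Lemma \ref{le:norm_bound}. You avoid the induction and avoid Lemma \ref{le:angle_cut} altogether by isolating a single monotone functional, the impact parameter $H_w(v)=|\bx_w(v)|\sin\angle(\bx_w(v),\bv_w(v+))$, whose non‑increase in $v$ follows directly from \eqref{oc3.5} together with $\bx(c)\cdot\bv(c-)\geq 0$ (Remark \ref{re:zero}); this lets you compare $\bx_w$ with $\bx_u$ in one pass, segment by segment over the collision times of $\bx_w$ in $(s,t)$, using the same area/law‑of‑sines identity and Lemma \ref{le:norm_bound}. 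The final step — triangle inequality for $\angle$ on the $\bx_w$ side and exact additivity of subtended angles on the $\bx_u$ side — coincides with the paper's. What each route buys: the paper reuses machinery it needs anyway (Lemma \ref{le:angle_cut} appears again in Section \ref{se:number}), while your version packages the geometry into one cleaner invariant and shortens the case analysis. Two cosmetic points: your parenthetical ``$0$ is not on that line'' is an assumption, but in the degenerate case $H(u)=0$ one has $\dist(0,L_j)=0$ for every $j$, all subtended angles vanish on both sides, and the inequality is trivial; and the bound $\bx_w(v)\cdot\bv_w(v+)\geq 0$ for $v>w$ should be justified by the identity $\bx_w(v)\cdot\bv_w(v+)=\bx(w)\cdot\bv(w+)+(v-w)$ rather than by citing Remark \ref{re:zero} alone, though this is immediate.
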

\begin{proof}
Without loss of generality, assume that $u\leq s<t$. First, we prove \eqref{eq:angle_x} when there are no collisions in $(s,t)$.  We proceed by induction on the number of collisions in $(u,w]$. If there are no collisions in $(u,w]$, the inequality holds because $\bx_u=\bx_w$. 

Assume that the result holds when there are $m$  collisions in $(u,w]$. If there are $m+1$ collisions in $(u,w]$, say, at times  $u< t_1<t_{2}<\cdots<t_{m+1}\leq w$, then we have only $m$ collisions in $(t_1,w]$. Therefore, for $s,t \geq t_1$,
\begin{align}
\label{eq:02}
\angle(\bx_w(s),\bx_w(t)) &\leq \angle(\bx_{t_1}(s),\bx_{t_1}(t)).
\end{align}
We have assumed that there are no collisions in $(s,t)$. Hence, it will suffice to consider  the following two cases: (i) $u \leq s,t \leq t_1$, and (ii) $s,t \geq t_1$. 

If $u\leq s,t\leq t_1$  then $\bx_{w}(s)=\bx_{u}(s)$ and $\bx_{w}(t)=\bx_{u}(t)$, from which \eqref{eq:angle_x} follows.

Next consider the case when $s,t\geq t_1$. By the law of sines for the triangle with vertices $0, \bx_{t_1}(s)$ and $ \bx_{t_1}(t)$,
\begin{align*}
\frac{\sin\angle ( \bx_{t_1}(s), \bx_{t_1}(t) )}{t-s} &= \frac{\sin\angle(\bx_{t_1}(t),\bv_{t_1}(t_1+))}{|\bx_{t_1}(s)|}
= \frac{\sin\angle(\bx_{t_1}(t),\bv_{t_1}(t+))}{|\bx_{t_1}(s)|}.
\end{align*}
By the law of sines for the triangle with vertices  $0, \bx_{u}(s)$ and $\bx_{u}(t)$,
\begin{align*}
\frac{\sin\angle ( \bx_{u}(s), \bx_{u}(t) )}{t-s} &= \frac{\sin\angle(\bx_{u}(t),\bv_u(u+))}{|\bx_{u}(s)|}
= \frac{\sin\angle(\bx_{u}(t),\bv_{u}(t+))}{|\bx_{u}(s)|}.
\end{align*}
Therefore,
\begin{align}\label{d12.4}
\frac{\sin\angle ( \bx_{t_1}(s), \bx_{t_1}(t) )}{ \sin\angle ( \bx_{u}(s), \bx_{u}(t) ) } 
= \frac{\sin\angle(\bx_{t_1}(t),\bv_{t_1}(t+))}
{\sin\angle(\bx_{u}(t),\bv_u(t+))}\cdot \frac{|\bx_u(s)|}{|\bx_{t_1}(s)|} .
\end{align}

Our choice of $t_0=0$ in Remark \ref{re:zero} shows that  $\angle(\bx(0) , \bv(0+)) \leq \pi/2$, so, by Lemmas \ref{le:angle_dwn} and \ref{le:angle_cut},
\begin{align*}
\angle(\bx_{t_1}(t),\bv_{t_1}(t+))
\leq \angle(\bx_{0}(t),\bv_{0}(t+))
\leq \angle(\bx_0(0),\bv_0(0+))
= \angle(\bx(0) , \bv(0+)) \leq \pi/2.
\end{align*}
We can prove in the same way that $\angle(\bx_{u}(t),\bv_u(t+))\leq \pi/2$.
The sine function is increasing on $[0,\pi/2]$, so the estimates on the  angles that we have just obtained and  Lemma \ref{le:angle_cut} show that
\begin{align*}
\frac{\sin\angle(\bx_{t_1}(t),\bv_{t_1}(t+))}
{\sin\angle(\bx_{u}(t),\bv_u(t+))}\leq 1 .
\end{align*}
This and the bound $|\bx_u(s)|/|\bx_{t_1}(s)|\leq 1$, derived from Lemma \ref{le:norm_bound}, show that the right hand side of \eqref{d12.4} is bounded above by 1. Hence the left hand side is bounded by 1 as well, 
i.e.,
\begin{align*}%\label{d12.4}
\sin\angle ( \bx_{t_1}(s), \bx_{t_1}(t) )\leq  \sin\angle ( \bx_{u}(s), \bx_{u}(t) )  .
\end{align*}
When combined with \eqref{eq:02}, this yields
\begin{align*}%\label{d12.4}
\sin\angle ( \bx_{w}(s), \bx_{w}(t) )\leq  \sin\angle ( \bx_{u}(s), \bx_{u}(t) )  ,
\end{align*}
and this completes the proof of  \eqref{eq:angle_x} in the case when there are no collisions in $(s,t)$.

Next we prove the result for any number of collisions in $(s,t)$. Let $\left\{ t_1,\ldots,t_m\right\}$ be all  collision times in $(s,t)$. We have, by the previous case,
\begin{align*}
\angle(\bx_w(s),\bx_w(t)) 
&\leq \angle(\bx_w(s),\bx_w(t_1)) +\sum_{i=1}^{m-1} \angle(\bx_w(t_i),\bx_w(t_{i+1})) + \angle(\bx_w(t_m),\bx_w(t)) \\
&\leq  \angle(\bx_u(s),\bx_u(t_1)) +\sum_{i=1}^{m-1} \angle(\bx_u(t_i),\bx_u(t_{i+1})) + \angle(\bx_u(t_m),\bx_u(t)) \\
&= \angle(\bx_u(s),\bx_u(t)), 
\end{align*}
where the last equality holds because the velocity of $\bx_u$ is constant in $(s,t)$.

Finallly, if $w$ is larger than the last collision time, then  $\bx_w\equiv\bx$, so the result is also valid for $\bx$.
\end{proof}

\begin{lemma}
\label{le:norm_increasing}
The function $t\mapsto |\bx(t)|$ is increasing on $[0,\infty)$, and decreasing on $(-\infty,0]$. In particular, $|\bx(0)|\leq |\bx(t)|$ for all $t\in\R$.
\end{lemma}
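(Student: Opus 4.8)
The plan is to prove that $t\mapsto|\bx(t)|^2$ is non-decreasing on $[0,\infty)$ and non-increasing on $(-\infty,0]$ by a direct differentiation argument, using only that there are finitely many collisions (Theorem \ref{d10.2}), that velocities are constant between collisions with $|\bv(t+)|^2=1$ (assumption (A4)), and the location $t_0=0$ of the sign change of $\angle(\bx(\cdot),\bv(\cdot+))$ established in Remark \ref{re:zero}. The inequality $|\bx(0)|\leq|\bx(t)|$ for all $t$ is then immediate.

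First I would look at a single collision-free subinterval. If $t_i<t_{i+1}$ are consecutive collision times (or $t_i$ is the last, resp.\ $t_{i+1}$ the first, collision, with the interval a ray), then on $[t_i,t_{i+1})$ the velocity equals the constant vector $\bv(t_i+)$, so $\bx(r)=\bx(t_i)+(r-t_i)\bv(t_i+)$ and $|\bx(r)|^2=|\bx(t_i)|^2+2(r-t_i)\,\bx(t_i)\cdot\bv(t_i+)+(r-t_i)^2$, a quadratic in $r$ with leading coefficient $|\bv(t_i+)|^2=1$. Its derivative at $r$ is $2\,\bx(r)\cdot\bv(r+)$. By Remark \ref{re:zero}, $\angle(\bx(r),\bv(r+))\leq\pi/2$ for every $r\geq 0$, so $\bx(r)\cdot\bv(r+)\geq 0$ and $|\bx(\cdot)|^2$ is non-decreasing on each collision-free subinterval of $[0,\infty)$; symmetrically, $\angle(\bx(r),\bv(r+))>\pi/2$ for every $r<0$, so $\bx(r)\cdot\bv(r+)<0$ and $|\bx(\cdot)|^2$ is strictly decreasing on each collision-free subinterval contained in $(-\infty,0)$. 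Since $\bx(\cdot)$ is continuous and, by Theorem \ref{d10.2}, there are only finitely many collision times, $[0,\infty)$ is a finite union of such subintervals with matching endpoint values, so $|\bx(\cdot)|^2$ is non-decreasing on all of $[0,\infty)$; likewise it is non-increasing on $(-\infty,0]$. (An alternative for the half $t\geq 0$: for $0\leq s<t$, the path $\bx_s$ moves along a ray after time $s$ with $|\bx_s(r)|^2=|\bx(s)|^2+2(r-s)\,\bx(s)\cdot\bv(s+)+(r-s)^2$ increasing in $r\geq s$ because $\bx(s)\cdot\bv(s+)\geq 0$, so $|\bx(s)|=|\bx_s(s)|\leq|\bx_s(t)|\leq|\bx(t)|$ by Lemma \ref{le:norm_bound}.)

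The only point requiring care is the transition at $t=0$, where a priori the derivative $2\,\bx(r)\cdot\bv(r+)$ is non-negative for $r\geq 0$ and negative for $r<0$, which looks incompatible with a single continuous function $|\bx(\cdot)|^2$ unless that derivative vanishes at $0$. This is reconciled exactly by Remark \ref{re:zero}: if $0$ lies in the interior of a collision-free interval, then continuity of $r\mapsto\angle(\bx(r),\bv(0+))$ together with $\angle(\bx(r),\bv(0+))>\pi/2$ for $r<0$ and $\leq\pi/2$ at $r=0$ forces $\angle(\bx(0),\bv(0+))=\pi/2$, i.e.\ $\bx(0)\cdot\bv(0+)=0$, so $|\bx(\cdot)|^2$ has a local minimum at $0$ on that interval; if instead $0$ is itself a collision time, one simply analyzes the pieces $[t_i,0)$ and $[0,t_{i+1})$ separately, using $\bv(0-)$ on the left and $\bv(0+)$ on the right, and no incompatibility arises (on the left piece $r<0$, on the right piece $r>0$). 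Everything else in the argument is routine.
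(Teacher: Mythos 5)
Your proof is correct and follows essentially the same route as the paper: both arguments reduce to the fact that $\tfrac{d}{dt}|\bx(t)|^2 = 2\,\bx(t)\cdot\bv(t+)$ off the finitely many collision times, with the sign controlled by Remark \ref{re:zero}, and then integrate/patch over collision-free intervals. The only cosmetic differences are that the paper writes this as a single integral and handles $(-\infty,0]$ by time reversal, whereas you argue the negative half directly and add an (unnecessary but harmless) discussion of the transition at $t=0$.
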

\begin{proof}
The function $t\mapsto |\bx(t)|^2$ is differentiable  at all but finitely many times. It follows that for $t>s\geq 0$,
\begin{align*}	
|\bx(t)|^2 &=  |\bx(s)|^2 + \int_s^t 2\bx(u)\cdot\bv(u+)du \geq  |\bx(s)|^2,
\end{align*}
because $\bx(u)\cdot\bv(u+)\geq 0$ for $u\geq 0$, by Remark \ref{re:zero}. This shows that $t\mapsto |\bx(t)|$ is increasing in $[0,\infty)$. The second claim of the Lemma follows by time reversal, and the third claim follows easily from the first two.
\end{proof}

\section{Number of collisions}
\label{se:number}

The following is our key technical estimate.

\begin{theorem}\label{s28.1}
 The family of $n$ balls can be partitioned into two non-empty subfamilies such that no ball from the first family collides with a ball in the second family after time $100 n^3|\bx(0)|$.
\end{theorem}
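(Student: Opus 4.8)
The plan is to make the argument behind Theorem~\ref{d10.2} quantitative. The engine is the decay estimate $\sin\angle(\bx(t),\bv(t+))\le |\bx(0)|/t$ for $t>0$, which I would obtain by applying \eqref{d11.1} of Lemma~\ref{le:angle_cut} with $u=0$: for $t\ge0$ one has $\bx_0(t)=\bx(0)+t\bv(0+)$ and $\bv_0(t+)=\bv(0+)$, and since $\bx(0)\cdot\bv(0+)\ge0$ (because $\angle(\bx(0),\bv(0+))\le\pi/2$ by Remark~\ref{re:zero}) and $|\bv(0+)|=1$ by (A4), the law of sines gives $\sin\angle(\bx_0(t),\bv_0(t+))=|\bx(0)|\sin\angle(\bx(0),\bv(0+))\,/\,|\bx_0(t)|\le|\bx(0)|/t$; Lemma~\ref{le:angle_cut} and monotonicity of $\sin$ on $[0,\pi/2]$ then transfer the bound to $\bx(t),\bv(t+)$.

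From this I would extract, for $t$ of order $|\bx(0)|$ and larger: (i) $|\bx(t)|\ge c\,t$ for an absolute constant $c>0$, using $\frac{d}{dt}|\bx(t)|=\cos\angle(\bx(t),\bv(t+))\to1$ and Lemma~\ref{le:norm_increasing}; and (ii) the convergence rate $|\bv(t+)-\bv(\infty)|\le C|\bx(0)|/t$ for an absolute constant $C$, where $\bv(\infty):=\lim_{s\to\infty}\bv(s+)$ exists with $|\bv(\infty)|=1$ by Theorem~\ref{d10.2} and (A4). For (ii) I would combine $|\bv(t+)-\bx(t)/|\bx(t)||\le\angle(\bx(t),\bv(t+))$ with the bound $|\frac{d}{ds}(\bx(s)/|\bx(s)|)|\le\sin\angle(\bx(s),\bv(s+))/|\bx(s)|\le |\bx(0)|/(c\,s^{2})$, integrating from $t$ to $\infty$ and using $\bx(s)/|\bx(s)|\to\bv(\infty)$. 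Since coordinatewise sorting is $1$-Lipschitz for the sup norm, the same rate bounds $|w^r_j(t)-w^r_j(\infty)|$, where the $w^r_j(t)$ are the order-statistics velocities of Section~\ref{newproof} and $w^r_j(\infty)$ their limits.

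Next I would locate the splitting coordinate. All positions and velocities stay in the span of $\{x^k(0),v^k(0+):k\}$, of dimension at most $2n-2$; working there, $\sum_{j,k}(v^k_j(\infty))^2=1$ forces some coordinate $j$ with $\sum_k(v^k_j(\infty))^2\ge1/(2n)$. Since $\sum_k v^k_j(\infty)=0$, the sorted values $w^1_j(\infty)\le\cdots\le w^n_j(\infty)$ straddle $0$ with range at least $(\sum_k(w^k_j(\infty))^2/n)^{1/2}\ge1/(n\sqrt2)$, so some consecutive gap is at least $\delta_0:=1/(2n^2)$; fix such a $k$ and let $a$ be the midpoint of the gap. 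Choosing $T_1$ of order $n^2|\bx(0)|$ large enough that $C|\bx(0)|/T_1<\delta_0/4$, part (ii) gives $w^r_j(t)\le a-\delta_0/4$ for $r\le k$ and $w^\ell_j(t)\ge a+\delta_0/4$ for $\ell\ge k+1$, for all $t\ge T_1$. As in the proof of Theorem~\ref{d10.2}, the order statistics $y^r_j(t)$ are continuous with a.e.\ derivative $w^r_j(t)$, so $y^{k+1}_j(t)-y^k_j(t)$ is nonnegative, has a.e.\ derivative $\ge\delta_0/2$ on $(T_1,\infty)$, hence is strictly positive there and satisfies $y^{k+1}_j(t)-y^k_j(t)\ge(\delta_0/2)(t-T_1)$. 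Strict positivity of this gap for $t>T_1$ forces the set $L$ of balls occupying order-positions $1,\dots,k$ to be constant for $t>T_1$ (a ball could only change sides at a time when $y^k_j=y^{k+1}_j$); $L$ and its complement are non-empty since $1\le k\le n-1$. Finally, for $i\in L$ and $i'\notin L$ one gets $|x^{i'}(t)-x^i(t)|\ge x^{i'}_j(t)-x^i_j(t)\ge y^{k+1}_j(t)-y^k_j(t)\ge(\delta_0/2)(t-T_1)$, which exceeds $2$ once $t-T_1>4/\delta_0$; since $4/\delta_0$ is of order $n^2$ and $|\bx(0)|\ge1$ for $n$ non-overlapping balls of radius $1$, bookkeeping of the constants yields the split after time $100n^3|\bx(0)|$.

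The step I expect to be the main obstacle is showing that the \emph{partition} into $L$ and its complement — not merely the collision-free separation at one later time — genuinely stabilizes: one must exclude a ball drifting from the bottom $k$ order-positions to the top $n-k$ after $T_1$, which is exactly what strict positivity and linear growth of $y^{k+1}_j(t)-y^k_j(t)$ provide (via continuity of the $j$-th order statistics). Alongside this, one must handle ties among the $y^r_j$ and collisions between the balls in positions $k$ and $k+1$ — their $j$-coordinates differ for $t>T_1$, so the exchange of the velocity components along the line of centers does not spoil the bound $w^{k+1}_j(t)-w^k_j(t)\ge\delta_0/2$, which was deduced from the limiting velocities — exactly as in Section~\ref{newproof}; and one must keep the numerical constants small enough that the time bound is the stated $100n^3|\bx(0)|$.
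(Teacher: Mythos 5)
The first half of your argument --- the decay $\sin\angle(\bx(t),\bv(t+))\le|\bx(0)|/t$ via Lemma \ref{le:angle_cut}, the lower bound $|\bx(t)|\ge ct$, and the convergence rate $|\bv(t+)-\bv(\infty)|\le C|\bx(0)|/t$ --- is sound and parallels the paper's estimates \eqref{s27.3}--\eqref{s27.4}. The gap is in the splitting step. In Section \ref{newproof}, $w^r_j(t)$ is the $j$-velocity of whichever ball currently occupies the $r$-th \emph{position} in the $j$-th coordinate, and that is indeed the a.e.\ derivative of $y^r_j(t)$; but your gap $\delta_0$ is a gap in the sorted \emph{limiting velocities}. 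Until the balls have spatially ordered themselves consistently with their limiting velocities, the ball in position $k+1$ may belong to the slow group while the ball in position $k$ belongs to the fast group (or two balls of the same group may sit in positions $k,k+1$ with velocities in the ``wrong'' order), so the claimed a.e.\ bound $\tfrac{d}{dt}\bigl(y^{k+1}_j-y^k_j\bigr)\ge\delta_0/2$ on all of $(T_1,\infty)$ is false; it essentially presupposes the stabilization of the partition that you are trying to prove. The $1$-Lipschitzness of sorting controls the velocity-sorted values, not the velocities of the position-sorted balls.

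The natural repair --- replace $y^{k+1}_j-y^k_j$ by $G(t):=\min_{i'\in F}x^{i'}_j(t)-\max_{i\in L}x^i_j(t)$, with $L,F$ defined by the limiting velocities, which genuinely has a.e.\ derivative $\ge\delta_0/2$ after $T_1$ --- costs you the nonnegativity of the starting value: $G(T_1)$ can be as negative as $-2|\bx(T_1)|\ge-2(|\bx(0)|+T_1)=-\Theta(n^2|\bx(0)|)$. The separation time then becomes $T_1+O(n^2)\cdot\Theta(n^2|\bx(0)|)=\Theta(n^4|\bx(0)|)$, overshooting the claimed $100n^3|\bx(0)|$. The extra factor of $n$ traces back to projecting onto a single coordinate: your velocity gap is $\delta_0\sim n^{-2}$, whereas the paper obtains a gap of order $n^{-3/2}$ by partitioning the indices according to the \emph{full} velocity vectors $v^i(T+)$ at the single time $T=18\sqrt{n}(n-1)|\bx(0)|$ (a chaining argument on $|v^i(T+)-v^{i'}(T+)|$ starting from $\max_{i,i'}|v^i(T+)-v^{i'}(T+)|\ge n^{-1/2}$), and then separates each cross-pair individually via $|x^k(t)-x^\ell(t)|\ge|v^k(T+)-v^\ell(T+)|(t-T)-\cdots$, needing no common separating direction, no order statistics, and no limit $\bv(\infty)$. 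Since the separation time scales like $|\bx(0)|/(\mathrm{gap})^2$, the coordinate projection is exactly what turns $n^3$ into $n^4$; to reach the stated bound you should adopt the vector-valued partition.
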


\begin{proof}
Let $T = 18 \sqrt{n}( n-1)|\bx(0)| $. The speed of the moving point $\bx_0(t)$ in $\R^{dn}$ is equal to 1 because we assumed that $|\bv(0+)|^2 =1$. This and an elementary application of the law of sines, left to the reader, shows that  for any $t \geq T$,
\begin{align}\label{s27.3}
 \angle(\bx_0 (t),\bv(0+)) 
 \leq 2\frac{|\bx(0)|}{t}
\le \frac{1}{9\sqrt{n}(n-1)}.
\end{align}
This and \eqref{d11.1} imply that,
\begin{align*}
\angle(\bx (T),\bv(T+))
\leq \angle(\bx_0 (T),\bv_0(T+))
= \angle(\bx_0 (T),\bv(0+))\le \frac{1}{9\sqrt{n}(n-1)}
\leq \pi /2.
\end{align*}
Hence, in view of Remark \ref{re:zero}, we can apply 
Lemma \ref{le:angle_x}
to the trajectory $\{\bx(t), t\geq T\}$. Thus, for any $s,t\geq T$,
$\angle(\bx(s), \bx(t))\leq \angle(\bx_T(s), \bx_T(t)) $. This and
a simple analysis of the triangle with vertices $0, \bx_T(s)$ and  $\bx_T(t)) $ show that, for $t > s \geq T$,
\begin{align*}
\angle(\bx(s), \bx(t))
&\leq
\angle(\bx_T(s), \bx_T(t))
= \angle(\bx_T(s)-0, \bx_T(t)-0)\\
&\leq \angle(\bx_T(s)-0, \bx_T(t)-\bx_T(s))
= \angle(\bx_T(s), \bv_T(s+)).
\end{align*}
We combine this with Lemma \ref{le:angle_cut} and \eqref{s27.3} to obtain for $t>s\geq T$,
\begin{align}
\label{eq:01}
\angle(\bx(s), \bx(t)) 
&\leq
\angle(\bx_T(s), \bv_T(s+))
\leq \angle(\bx_0(s), \bv_0(s+)) \\
&=\angle(\bx_0(s), \bv_0(0+)) 
\leq \frac{1}{9\sqrt{n}(n-1)}.\nonumber
\end{align}
This, Lemma \ref{le:angle_cut}, and \eqref{s27.3}  imply that
for any $t>s \geq T$,
\begin{align*}
&\angle(\bv(s+), \bv(t+)) 
\leq
\angle(\bv(s+), \bx(s)) + \angle(\bx(s), \bx(t)) + \angle(\bx(t), \bv(t+))\\
&\ \leq \angle(\bx_0(s), \bv(0+)) + \angle(\bx(s), \bx(t)) + \angle( \bx_0(t), \bv(0+)) \leq \frac{1}{3\sqrt{n}(n-1)}.
\end{align*}
Hence, for all  $s,t \geq T$  and $k=1,\dots,n$,
\begin{align}\label{s27.4}
|v^k(s+) - v^k(t+)| \leq 
|\bv(s+)- \bv(t+)| \leq \angle(\bv(s+), \bv(t+))
 \leq \frac{1}{3\sqrt{n}(n-1)}.
\end{align}

Since $|\bv(T+)|^2 =1$, there exists $k_1$ such that $|v^{k_1}(T+)| \geq 1/\sqrt{n}$. The total momentum of the system is zero, so 
\begin{align}
\label{eq:velo}
\frac1{\sqrt{n}}\leq \max_{1\leq k\leq n} |v^k(T+)| \leq \max_{1\leq i,j\leq n} |v^i(T+)-v^j(T+)|.
\end{align}
Let $i^*$ and $j^*$ realize the maximum on the right hand side of \eqref{eq:velo}. We will argue that we can partition $\{1,2,\dots,n\}$ into two non-empty families $N_1$ and $N_2$ such that 
\begin{align}\label{s27.8}
|v^{\ell}(T+) - v^{k}(T+)|\geq \frac1{\sqrt{n}(n-1)}
\end{align}
 for all $\ell\in N_1$ and $k\in N_2$. This is possible because otherwise there would be a sequence $j_1 , j_2, \dots, j_m $ such that $m \leq n$, $j_1=i^*$, $j_m=j^*$ satisfying $|v^{j_{i-1}}(T+) - v^{j_i}(T+)|< 1/((n-1)\sqrt{n})$
for all $i$, which, by the triangle inequality, would contradict \eqref{eq:velo}.

Next we will show that 
two balls, one with index in $N_1$ and the other one with index in $N_2$, will not collide after time $T_*:=T+3CT \sqrt{n}(n-1) $, where $C$ is to be chosen later. Consider  $\ell\in N_1$, $k\in N_2$, and $t\geq T_*$. Writing $x^i(t) = x^i(T) + v^i(T+)(t-T) + \int_T^t (v^i(s+)-v^i(T+))ds$, for $i=k,\ell$, we obtain
\begin{align*}
|x^k(t) - x^\ell(t)| 
&\geq \left| v^k(T+)-v^\ell(T+)\right|(t-T)  - \left| \int_T^{t} (v^k(s+) - v^k(T+) ) ds\right|\\
&\qquad\qquad - \left|\int_T^{t} (v^\ell(s+) - v^\ell(T+) ) ds \right| - | x^k(T) - x^\ell(T) |.
\end{align*}
Using estimates \eqref{s27.8} and \eqref{s27.4},  we obtain
\begin{align*}
|x^k(t) - x^\ell(t)|  
&\geq  \frac{t-T}{3\sqrt{n}(n-1)} -  | x^k(T) - x^\ell(T) | \geq CT -  | x^k(T) - x^\ell(T) |.
\end{align*}
Since $|\bv(s+)|=1$ for all $s$,
\begin{align*}
| x^k(T) - x^\ell(T) |\leq\sqrt{2}|\bx(T)|\leq \sqrt{2}\left( |\bx(0)|+\int_0^T|\bv(u+)|du\right) =\sqrt{2}(|\bx(0)|+T).
\end{align*}
 We have  $T = 18 \sqrt{n}( n-1)|\bx(0)| \geq 18 \sqrt{2}( 2-1)|\bx(0)|
= 18 \sqrt{2}|\bx(0)|$, so
\begin{align*}
|x^k(t) - x^\ell(t)|  
&\geq (C-\sqrt2)T - \sqrt{2}|\bx(0)| \geq \left( 18\sqrt{2}(C-\sqrt 2)  -\sqrt 2 \right) |\bx(0)| .
\end{align*}
By choosing $C=1.61$ we ensure that the right hand side above is  greater than $(5/2)\sqrt{2} |\bx(0)|$.

Since all balls have radii 1 and cannot overlap, we must have $|x^1(0)| + |x^2(0)| \geq 1$. Hence,  $|\bx(0)| \geq (|x^1(0)|^2 + |x^2(0)|^2)^{1/2} \geq \sqrt{2}/2$. It follows that for $t\geq T_*$,
\begin{align}\label{a7.1}
|x^k(t) - x^\ell(t)|  > (5/2)\sqrt{2} |\bx(0)| \geq (5/2)\sqrt{2} \cdot \sqrt{2}/2 = 5/2,
\end{align}
so balls $k$ and $\ell$ do not collide after time $T_*$.  It is easy to check that $T_*<100 n^3 |\bx(0)|$, assuming that $C=1.61$.
\end{proof}

We will need the following quantitative version of Theorem \ref{s28.1}.

\begin{corollary}\label{a7.2}
 The family of $n$ balls can be partitioned into two non-empty subfamilies
 $ N_1$ and $ N_2$
such that if  $\ell\in N_1$, $k\in N_2$, and $t\geq 100 n^3 |\bx(0)|$ then
$|x^k(t) - x^\ell(t)|  >  5/2$.
\end{corollary}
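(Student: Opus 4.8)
The plan is to extract this corollary directly from the internal workings of the proof of Theorem \ref{s28.1} rather than from its statement, since the proof already establishes a slightly stronger quantitative conclusion than what is formally stated. Specifically, in the proof of Theorem \ref{s28.1} the partition $\{1,\dots,n\} = N_1 \cup N_2$ is constructed so that the velocity separation bound \eqref{s27.8} holds, namely $|v^\ell(T+) - v^k(T+)| \geq \frac{1}{\sqrt{n}(n-1)}$ for all $\ell \in N_1$ and $k \in N_2$, where $T = 18\sqrt{n}(n-1)|\bx(0)|$. I would keep this very same partition $N_1, N_2$.

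Next I would follow verbatim the chain of inequalities from the proof of Theorem \ref{s28.1}. For $\ell \in N_1$, $k \in N_2$ and any $t \geq T_* := T + 3CT\sqrt{n}(n-1)$ with $C = 1.61$, the decomposition $x^i(t) = x^i(T) + v^i(T+)(t-T) + \int_T^t (v^i(s+) - v^i(T+))\,ds$ together with estimates \eqref{s27.8} and \eqref{s27.4} gives, exactly as in the proof,
\begin{align*}
|x^k(t) - x^\ell(t)| \geq \frac{t-T}{3\sqrt{n}(n-1)} - |x^k(T) - x^\ell(T)| \geq CT - |x^k(T) - x^\ell(T)|.
\end{align*}
Bounding $|x^k(T) - x^\ell(T)| \leq \sqrt{2}|\bx(T)| \leq \sqrt{2}(|\bx(0)| + T)$ and using $T = 18\sqrt{n}(n-1)|\bx(0)| \geq 18\sqrt{2}|\bx(0)|$ yields $|x^k(t) - x^\ell(t)| > (5/2)\sqrt{2}\,|\bx(0)|$, again as in the proof. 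Finally, since the balls have radius $1$ and do not overlap, $|\bx(0)| \geq \sqrt{2}/2$, so $|x^k(t) - x^\ell(t)| > (5/2)\sqrt{2} \cdot \sqrt{2}/2 = 5/2$.

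The only remaining point is to note that $T_* < 100 n^3 |\bx(0)|$ (with $C = 1.61$), which is already checked at the end of the proof of Theorem \ref{s28.1}; hence the conclusion $|x^k(t) - x^\ell(t)| > 5/2$ holds for all $t \geq 100 n^3 |\bx(0)|$, since monotonicity of the lower bound in $t$ over $[T_*, \infty)$ — or simply the fact that $100 n^3|\bx(0)| \geq T_*$ and the displayed estimate is valid for all $t \geq T_*$ — carries it through. There is essentially no obstacle here: this corollary is really just a restatement of an estimate that appears inside the proof of Theorem \ref{s28.1} but was not promoted to the theorem's statement. The one mild subtlety to be careful about is that the bound $|x^k(t)-x^\ell(t)| > 5/2$ must be asserted for the specific partition produced by the proof, not an arbitrary one, so I would phrase the corollary as "the same partition as in Theorem \ref{s28.1}" or reproduce the construction of $N_1, N_2$ explicitly.
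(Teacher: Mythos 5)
Your proposal is correct and is essentially identical to the paper's own proof: the authors prove Corollary \ref{a7.2} simply by pointing to the inequality \eqref{a7.1} established inside the proof of Theorem \ref{s28.1} (together with the check that $T_* < 100n^3|\bx(0)|$), which is exactly the extraction you describe. Your added remark that the partition must be the specific one constructed in that proof is a fair point of precision, but no new argument is needed.
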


\begin{proof}
The claim has been proved in \eqref{a7.1}.
\end{proof}

\begin{corollary}\label{d9.1}
Let $T = \inf\{t\geq 0: |\bx(t)| \geq (1+100 n^3)|\bx(0)|\}$.
 The family of $n$ balls can be partitioned into two non-empty subfamilies such that no ball from the first family collides with a ball in the second family after time $T$.
\end{corollary}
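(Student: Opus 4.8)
The plan is to show that the stopping time $T$ is in fact no smaller than the fixed time $100n^3|\bx(0)|$ appearing in Theorem \ref{s28.1}, and then simply to reuse the partition produced by Corollary \ref{a7.2}.

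First I would record the elementary growth bound for $|\bx(t)|$. Since the total energy equals $1$ we have $|\bv(u+)|=1$ for every $u$, and $\bx(t)=\bx(0)+\int_0^t\bv(u+)\,du$, so $|\bx(t)|\le|\bx(0)|+t$ for all $t\ge 0$. Because no two balls overlap we have $|\bx(0)|>0$, hence $|\bx(t)|\le|\bx(0)|+t<(1+100n^3)|\bx(0)|$ whenever $t<100n^3|\bx(0)|$. By the definition of $T$ this forces $T\ge 100n^3|\bx(0)|$.

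Now I would invoke Corollary \ref{a7.2}: it provides a partition $\{1,\dots,n\}=N_1\cup N_2$ into non-empty sets with $|x^k(t)-x^\ell(t)|>5/2$ for every $\ell\in N_1$, $k\in N_2$ and every $t\ge 100n^3|\bx(0)|$. Since $T\ge 100n^3|\bx(0)|$, the same inequality holds for all $t\ge T$. As the balls have radius $1$, a collision of balls $k$ and $\ell$ would require $|x^k(t)-x^\ell(t)|=2$, which is incompatible with $|x^k(t)-x^\ell(t)|>5/2$; hence no ball of $N_1$ ever collides with a ball of $N_2$ after time $T$, proving the corollary.

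The statement is essentially a free consequence of the two earlier results, and there is no real obstacle. The only point worth a moment's thought is the direction of the inequality between $T$ and $100n^3|\bx(0)|$: one might expect $T$ to be the smaller quantity, but the constraint $|\bx(t)|\le|\bx(0)|+t$ (the configuration point moves at unit speed) shows that it is the larger one, which is precisely what is needed to transfer the fixed-time conclusion of Corollary \ref{a7.2} to the stopping time $T$.
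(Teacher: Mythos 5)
Your proof is correct and follows essentially the same route as the paper: both arguments use that $\bx(\cdot)$ moves at unit speed in $\R^{nd}$ (so $|\bx(t)|\le|\bx(0)|+t$, equivalently $|\bx(T)-\bx(0)|\le T$) to conclude $T\ge 100n^3|\bx(0)|$, and then transfer the partition from Theorem \ref{s28.1} (or its quantitative form, Corollary \ref{a7.2}). No issues.
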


\begin{proof}
The speed  $|\bv(t)|$ of the trajectory $\bx(t)$ in $\R^{nd}$ is equal to 1 because we assumed that $|\bv(0)|^2 =1$ and, therefore, by the conservation of energy, $|\bv(t)|^2 =1$ for all $t\geq 0$.
Since $|\bx(T)-\bx(0)| \geq 100 n^3 |\bx(0)|$, we conclude that $T \geq 100 n^3 |\bx(0)|$. The corollary follows from this and Theorem \ref{s28.1}.
\end{proof}

\begin{theorem}
\label{th:nf}
Suppose that 
 at the time of collision of any two balls, the distance between any other pair of balls is greater than $\delta \in (0,1)$.
Then the total number of collisions is bounded by $73^n(n!)^{3/2} (\log(5n))^n \delta^{-1}$, for all $n\geq 3$.
\end{theorem}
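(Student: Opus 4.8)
\emph{Overview.} The plan is to combine a $\delta$-sensitive bound on the number of collisions inside a bounded time window with the splitting mechanism of Theorem \ref{s28.1}, applied recursively on the number of balls. First I would note that two consecutive collision times $t_1<t_2$ satisfy $t_2-t_1>\delta/2$: the pair colliding at $t_2$ is not the pair colliding at $t_1$, because immediately after a collision the relative velocity of that pair points outward (a direct consequence of the collision law \eqref{oc2.3}--\eqref{oc4.2}), so the same pair cannot collide again until a third ball deflects one of its members; hence at time $t_1$ the pair that collides at $t_2$ has surface distance $>\delta$ while at $t_2$ that distance is $0$, and since all speeds are $\le1$ the relative speed is $\le2$, forcing $t_2-t_1>\delta/2$. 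Consequently any time interval of length $L$ contains at most $2L/\delta+1$ collisions.

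\emph{Cross-collisions live on a short window.} Going back into the proof of Theorem \ref{s28.1}: for $t\ge T:=18\sqrt n(n-1)|\bx(0)|$ every velocity stays within $\tfrac1{3\sqrt n(n-1)}$ of $\bv(T+)$, while the partition $N_1,N_2$ produced there has $|v^\ell(T+)-v^k(T+)|\ge\tfrac1{\sqrt n(n-1)}$ for $\ell\in N_1$, $k\in N_2$. Setting $r(t)=x^\ell(t)-x^k(t)$ and $\Delta=v^\ell(T+)-v^k(T+)$, one gets $\tfrac{d}{dt}\!\left(r(t)\cdot\tfrac{\Delta}{|\Delta|}\right)\ge\tfrac1{3\sqrt n(n-1)}$ for all $t\ge T$, so this scalar function is strictly increasing; since a collision of $\ell$ and $k$ forces $|r(t)|=2$, hence $\bigl|r(t)\cdot\tfrac{\Delta}{|\Delta|}\bigr|\le2$, such collisions can occur only on a time interval of length $\le12\sqrt n(n-1)$ — a quantity that does \emph{not} involve $|\bx(0)|$. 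Together with the window bound this shows that the total number of cross-collisions is at most $P(n)/\delta$ for an explicit polynomial $P$, independently of the configuration.

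\emph{The recursion.} By Corollary \ref{d9.1} the family splits into $N_1,N_2$, and every collision after the hand-off time is internal to $N_1$ or $N_2$ except for the cross-collisions just controlled. For each subfamily I would pass to its centre-of-momentum frame — which changes neither the collision count nor the $\delta$-hypothesis, keeps all speeds $\le1$, and by ${\textstyle\sum}_{i\in N_1}|x^i-\bar x_{N_1}|^2\le|\bx|^2$ only decreases the norm — and then invoke Lemmas \ref{le:norm_bound} and \ref{le:norm_increasing} to bound that subfamily's geometric scale at its own ``time $0$'' by the parent's $|\bx|$ at the hand-off time. Collisions preceding the hand-off are counted by the window bound. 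Carrying this out forward and, by time-reversal, backward, produces a recursion of the form $g(n)\le(\text{window term})+(\text{cross term})+g(n_1)+g(n_2)+g(n_1')+g(n_2')$ with $n_1+n_2=n=n_1'+n_2'$ and base cases $g(1)=0$, $g(2)\le1$.

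\emph{The main difficulty.} The crux is to solve this recursion so that the geometric scale does not leak into the final bound. The decisive point is to hand off at the velocity-freezing time $T\sim n^{3/2}|\bx(0)|$, rather than at the later geometric-separation time $T_*\sim n^3|\bx(0)|$ of Theorem \ref{s28.1} — which is precisely why the refined cross-collision estimate above is needed — so that a subfamily's scale grows by only a factor $O(n^{3/2})$ per level. Summing the window terms along the worst (maximally unbalanced) chain of splits then produces the product $\prod_{k\le n}k^{3/2}=(n!)^{3/2}$, an absolute $73^n$ from the accumulated explicit constants of Theorem \ref{s28.1}, and a factor $(\log 5n)^n$ that enters when the geometric-type sum of scales at each level is collapsed; the $\delta^{-1}$ stays linear because only the window terms carry it and the $|\bx(0)|$ in them cancels after the telescoping. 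Pushing the explicit constants of Theorem \ref{s28.1} through this bookkeeping so as to land exactly on $73^n(n!)^{3/2}(\log 5n)^n\delta^{-1}$ for every $n\ge3$ is the technical heart of the proof.
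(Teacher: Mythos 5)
There is a genuine gap, and it sits exactly where you locate ``the technical heart'': your recursion cannot eliminate the dependence on $|\bx(0)|$. Your ``window term'' counts the collisions on $[0,T)$ with $T\sim n^{3/2}|\bx(0)|$ (or $n^3|\bx(0)|$) via the $\delta/2$-spacing of consecutive collisions, so at the top level it is of order $n^{3/2}|\bx(0)|\delta^{-1}$, and $|\bx(0)|$ is an arbitrary initial datum with no upper bound (only the lower bound $|\bx(0)|\geq\sqrt2/2$). Nothing in your scheme cancels this top-level factor: the ``telescoping'' you invoke could at best control how the scales of the \emph{sub}families relate to the parent's scale, but the parent's own window term already carries $|\bx(0)|$ multiplicatively, and the theorem's bound must be uniform over all initial conditions. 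Your refined cross-collision estimate (cross-collisions after $T$ confined to a window of length $12\sqrt n(n-1)$) is correct but irrelevant to this obstruction, since the problem is the internal collisions on $[0,T)$, not the cross-collisions after $T$.

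The paper's proof closes this hole with a dichotomy on $|\bx(0)|$ against $R=6n^{3/2}$ that your proposal is missing. If $|\bx(0)|\leq R$, then $T\leq 600\,n^{9/2}$ is bounded by a polynomial in $n$ alone and the $\delta/2$-spacing argument gives at most $O(n^{9/2}\delta^{-1})$ collisions on $[0,T)$ --- this is the only place $\delta^{-1}$ enters, which is why it appears linearly. If $|\bx(0)|>R$, no spacing argument is used at all on $[0,T)$: the configuration is then so spread out that at \emph{every} time $s$ one can split the index set into two groups at mutual distance $\geq|\bx_0(s)|n^{-3/2}\geq 6$, so on each interval $[s_j,s_{j+1})$ with $s_{j+1}-s_j=|\bx_0(s_j)|n^{-3/2}/4$ all collisions are internal to an $(n-1)$-ball subfamily and are counted by the induction hypothesis $K_\delta(n-1)$ (defined as a supremum over all initial conditions, so no bookkeeping of subfamily scales or frame changes is needed). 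Because these subintervals grow geometrically with $|\bx_0(s_j)|$, only $j_*\leq 18n^{3/2}\log(5n)$ of them are needed to cover $[0,T)$ \emph{regardless of} $|\bx(0)|$; this is the actual source of the $(\log(5n))^n$ factor, not a collapse of a sum of geometric scales. The resulting recursion $K_\delta(n)\leq 4K_\delta(n-1)+72n^{3/2}\max\bigl(34n^3\delta^{-1},\log(5n)K_\delta(n-1)\bigr)$ then yields $73^n(n!)^{3/2}(\log(5n))^n\delta^{-1}$. Without some version of this large-$|\bx(0)|$ spatial-splitting argument, your outline does not yield a configuration-independent bound.
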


\begin{proof}
Fix $\delta\in(0,1)$, and let $K_\delta(n)$ be the maximum possible number of collisions among $n$ balls
if the assumption of the theorem is satisfied. We will use induction to estimate $K_\delta(n)$.
Obviously, $K_\delta(2)=1$. 

By Theorem \ref{s28.1},
the family of $n$ balls can be partitioned into two non-empty subfamilies such that no ball from the first family collides with a ball in the second family after a finite time given by $T=100n^3|\bx(0)|$. The total number of collisions within each subfamily is bounded by $K_\delta(n-1)$. Hence, the total number of collisions for the whole family of $n$ balls is bounded by $2K_\delta(n-1)$ on the interval $[T,\infty)$.

To estimate the number of collisions in $[0,T)$, we will split the argument into two parts, depending on the value of $|\bx(0)|$. Let $R= 6n^{3/2}$. 

\bigskip
\emph{Step 1}.
In this step of the proof, we will assume that
$|\bx(0)| \leq R$. Suppose that  balls $B_j$ and $B_k$ collide at a time $t\in [0,T)$. These balls will not collide again until the trajectory of at least one of them is changed. Hence, there will be no collisions after $t$ until some other pair of balls (one of which can be $B_j$ or $B_k$) collide. Since the distance between any pair of balls, except $(B_j,B_k)$, is bounded below by $\delta$ at time $t$ and all speeds are bounded by 1, there will be no collisions in the time interval $(t, t+ \delta/2)$. We let $J_m =[(m-1)\delta/2,m\delta/2)$ for $m\geq 1$. The argument given above shows that there is at most one collision in $J_m$, and thus the number of collisions in $[0,T)$ is at most
\begin{align}
\label{m16.1}
\frac T {\delta/2}+1
&=\frac{100n^3|\bx(0)|}{\delta/2}+1 = 200n^3|\bx(0)|\delta^{-1}+1
\leq 200n^3 R \delta^{-1}+1\\
&\leq 200n^3 6n^{3/2} \delta^{-1}+1 \leq 1201n^{9/2}\delta^{-1}
\leq 1224n^{9/2}\delta^{-1}.\notag
\end{align}
The reader may be puzzled by the last inequality. Later, in \eqref{m16.2}, we will factor 1224 into the product of 72 and 34, to have a more elegant formula.

\bigskip
\emph{Step 2}. Next suppose that $|\bx(0)| > R$. By Lemma \ref{le:norm_increasing}, we have $|\bx(t)|>R $ for all $t\geq 0$.
Let $D(t) = \max_{1\leq i,j\leq n} |x^i(t) - x^j(t)|$.
We have assumed in (A3) that the center of mass is at the origin. Hence,  $|x^k(t)| \leq D(t)$ for all $k$ and $t$. There must exist $k$ such that $|x^k(t) | \geq D(t)/2$. It follows that for all $t$,
\begin{align}\label{s30.1}
D(t)/2 \leq |\bx(t)| \leq \sqrt{n} D(t).
\end{align}

Consider any $s\in [0,T)$. 
There exist $k_1$ and $k_2$ such that 
\begin{align*}
|x^{k_1}(s) - x^{k_2}(s)| = D(s) \geq |\bx(s)| n^{-1/2} \geq |\bx_0(s)| n^{-1/2},
\end{align*}
where in the last inequality we have used \eqref{d12.3}. 

We will argue that one can partition $\{1,2,\dots,n\}$ into two non-empty families $N_1(s)$ and $N_2(s)$ such that  
\begin{align}\label{s28.2}
|x^{j}(s) - x^{k}(s)|\geq  |\bx_0(s)| n^{-3/2}
\end{align}
 for all $j\in N_1(s)$ and $k\in N_2(s)$. This is possible because otherwise there would be a sequence $j_1 , j_2, \dots, j_m $ such that $m \leq n$, $j_1=k_1$, $j_m=k_2$ and $|x^{j_{i-1}}(s) - x^{j_i}(s)|<  |\bx_0(s)| n^{-3/2}$
for all $i=2,\dots,m$, and, by the triangle inequality, this would contradict the fact that $|x^{k_1}(s) - x^{k_2}(s)| \geq  |\bx_0(s)| n^{-1/2}$.

For any $s\geq 0$ we have
\begin{align}\label{a4.1}
|\bx_0(s)|^2 = |\bx(0)|^2 + 2s \bv(0+)\cdot\bx(0) + s^2 \geq |\bx(0)|^2,
\end{align}
because we chose $t_0=0$ in Remark \ref{re:zero} and, therefore,  $\bv(0+)\cdot\bx(0) \ge 0$.

The maximum speed of any ball is 1 so \eqref{s28.2} and \eqref{a4.1} imply that  for all $j\in N_1(s)$, $k\in N_2(s)$ and $t \in [s, s+ |\bx_0(s)| n^{-3/2}/4]$, 
\begin{align}\label{a7.3}
|x^{j}(t) - x^{k}(t)|&\geq 
|\bx_0(s)| n^{-3/2} - 2 |\bx_0(s)| n^{-3/2}/4 = |\bx_0(s)| n^{-3/2}/2 \\
&\geq |\bx(0)| n^{-3/2}/2 \geq R n^{-3/2}/2 \geq
6 n^{3/2} n^{-3/2}/2=3 >2.\notag
\end{align}
This implies that there are no collisions of balls belonging to  different subfamilies $ N_1(s)$ and $ N_2(s)$ in the interval $[s, s +  |\bx_0(s)|  n^{-3/2}/4]$.
 The total number of collisions within each subfamily is bounded by $K(n-1)$. Hence, the total number of collisions for the whole family of $n$ balls is bounded by $2K(n-1)$ on the interval $[s, s+ |\bx_0(s)|  n^{-3/2}/4]$.

Let $s_0 = 0$ and $s_{j+1} = s_{j} +  |\bx_0(s_j)|  n^{-3/2}/4$ for $j\geq 0$. It follows from \eqref{a4.1} that $|\bx_0(s)|\geq \sqrt{|\bx(0)|^2+s^2}\geq(|\bx(0)|+s)/\sqrt{2}$. It is straightforward to show by induction that 
\begin{align*}
s_j \geq |\bx(0)| \left( \left( 1+ \frac{1}{4\sqrt{2}n^{3/2}} \right)^j -1\right).
\end{align*}

Let $j_* = \min\{j: s_j \geq T\}$. The argument in the previous paragraph shows that in each of the $j_*$ intervals $[s_j,s_{j+1})$ that cover $[0,T)$ there are at most $2K_\delta(n-1)$ collisions. We obtain that there are no more than $2j_*K_\delta(n-1)$ collisions in $[0,T)$.

We will show that 
\begin{align}\label{a7.5}
j_*\leq 18n^{3/2}\log(5n).
\end{align}
The definition of $j_*$  implies that
\begin{align*}%\label{a8.6}
 \log(1+100n^3) &\geq (j_*-1)\log \left(  1+ \frac{1}{4\sqrt{2}n^{3/2}} \right) \geq (j_*-1) \left(  1+ {4\sqrt{2}n^{3/2}} \right)^{-1},
\end{align*}
where we used the well known inequality $\log(1+x^{-1})\geq (1+x)^{-1}$, for $x>0$. 
This implies that
\begin{align}\label{a8.10}
j_* \leq 1+(1+4\sqrt{2}n^{3/2}) \log(1+100n^3).
\end{align}
For $n\geq 3$,
\begin{align}\label{a8.7}
&18 n^{3/2} \log(5n) \geq 18\cdot 3^{3/2} \log(15) > 250,\\
& (1 +  4\sqrt{2}n^{3/2} ) / (4\sqrt{2}n^{3/2}) = 1 + 1/ (4\sqrt{2}n^{3/2}) \leq  1 + 1/ (4\sqrt{2} \cdot 3^{3/2}) < 1.04,\label{a8.8}\\
&\log(1+100n^3) = 3\log(5n) + \log(0.8 + 1/(125 n^3))\label{a8.9}\\
&\quad \leq 3\log(5n) + \log(4/5 + 1/(125 \cdot 3^3))  < 3\log(5n). \notag
\end{align}
We divide \eqref{a8.10}  by $18 n^{3/2} \log(5n) $ and use \eqref{a8.7}-\eqref{a8.9}  to get
\begin{align*}
\frac{j_*}{18 n^{3/2} \log(5n)} &\leq \frac{1}{18 n^{3/2} \log(5n)} + \frac{4\sqrt{2}}{6} \cdot  \frac{1+4\sqrt{2}n^{3/2}}{ 4\sqrt{2} n^{3/2} } \cdot \frac{ \log(1+100n^3) }{3 \log(5n)} \\
&< 0.004 + 0.943 \cdot 1.04 \cdot 1 <1.
\end{align*}
This completes the proof of \eqref{a7.5}.

 We summarize this step by stating that, when $|\bx(0)|>R$, an upper bound for the number of collisions in $[0,T)$ is 
\begin{align}
\label{m22.1}
36n^{3/2}\log(5n) K_\delta(n-1).
\end{align}

\bigskip
\emph{Step 3}. Estimates \eqref{m16.1} and \eqref{m22.1}, and the argument given  at the beginning of this proof, give us a bound for the number of collisions in $[0,\infty)$. We apply time reversal to double such bound and obtain a bound for the  number of collisions on $(-\infty, \infty)$. Therefore, we have that  $K_\delta(2)=1$, and
\begin{align}
\label{m16.2}
K_\delta(n) &\leq 4K_\delta(n-1) + 72n^{3/2}\max\left( 34n^{3}\delta^{-1} , \log(5n)K_\delta(n-1)\right),
\end{align}
for any $n\geq 3$. 

Let $\varphi_\delta(n) = 73^n(n!)^{3/2} (\log(5n))^n\delta^{-1}$. We will prove by induction that $K_\delta(n)\leq \varphi_\delta(n)$ for all $n\geq 2$. For $n=2$, this is direct since $K_\delta(2)=1$, and $\delta<1$. Assume that $K_\delta(n-1)\leq \varphi_\delta(n-1)$ for some $n\geq3$. We have 
\begin{align*}
34\leq \log(15)\cdot 2^{3/2}(\log 10)^2 \leq \log(5n) ((n-1)!)^{3/2}(\log(5(n-1)))^{n-1}.
\end{align*}
Note that $n^3\leq 73^{n-1}$ because $\log(n)/(n-1) \leq 1< \log(73)/3$. From these inequalities, it follows that
\begin{align*}
34n^3\delta^{-1} \leq \log(5n)\varphi_\delta(n-1).
\end{align*}
We use this inequality, together with \eqref{m16.2}, and the induction hypothesis to obtain 
\begin{align}\label{a8.11}
K_\delta(n)&\leq 4\varphi_\delta(n-1) + 72n^{3/2}\log(5n)\varphi_\delta(n-1) \leq 73n^{3/2}\log(5n)\varphi_\delta(n-1) \\
&= 
73^n(n!)^{3/2} (\log(5n))^n\delta^{-1} = \varphi_\delta(n).\notag
\end{align}
This completes the proof.
\end{proof}

\begin{proof}[Proof of Theorem \ref{th:nc}]

Recall the definition of $K_\delta(n)$ from the proof of Theorem \ref{th:nf}. The bound $n!\leq e n^{n+1/2}e^{-n}\leq n^{n+1/2}$ is related to Stirling's approximation. We use this bound and \eqref{a8.11} to obtain
\begin{align}\label{a8.12}
\log K_\delta(n) &\leq n\log 73  +\frac32 \log n! + n \log\log(5n) + \log \delta^{-1}  \\
&\leq \left(\frac{\log 73}{\log n}  + \frac32\frac{n+\frac12}{n} + \frac{ \log\log(5n)}{\log n} \right) n\log n  + \log\delta^{-1}\notag\\
&= \left( \frac32 + \frac{\log 73}{\log n}   + \frac{3}{4n} + \frac{ \log\log(5n)}{\log n} \right) n\log n  + \log\delta^{-1} .\notag
\end{align}
We will next find a bound for the last three terms in the parenthesis in the last formula. We first use the fact that for $x\geq 1$, $\log(x)\leq \sqrt{x}$, to obtain
\begin{align*}
\sqrt{\log n} \left( \frac{\log 73}{\log n}  + \frac{3}{4n} + \frac{ \log\log(5n)}{\log n} \right) &\leq \frac{\log{73}}{\sqrt{\log n}} + \frac{3}{4n^{3/4}} + \sqrt{\frac{\log(5n)}{\log n}} \\
&= \frac{\log{73}}{\sqrt{\log n}} + \frac{3}{4n^{3/4}} + \sqrt{1 +\frac{\log 5}{\log n }}.
\end{align*}
By monotonicity, it follows that for $n\geq 3$,
\begin{align*}
\sqrt{\log n} \left( \frac{\log 73}{\log n}  + \frac{3}{4n} + \frac{ \log\log(5n)}{\log n} \right)
&\leq \frac{\log{73}}{\sqrt{\log 3}} + \frac{3}{4\cdot 3^{3/4}} + \sqrt{1 +\frac{\log(5)}{\log(3)}} < 6 .
\end{align*}
We combine this estimate with \eqref{a8.12} to obtain
\begin{align*}
\log K_\delta(n) &\leq \left( \frac32 + \frac{6}{\sqrt{\log n}} \right) n\log n + \log \delta^{-1}.
\end{align*}
It follows that for $n\geq \exp (36\eps^{-2})$, 
\begin{align*}
K_\delta(n) \leq \delta^{-1} n^{3n/2 + \eps n }.
 \end{align*}
By setting $\delta = n^{-n}$, we obtain the theorem.
\end{proof}

\section{Connected configurations}\label{connect}

We recall some notation and definitions
from Section \ref{se:intro}.
We assume that all balls have radii equal to 1. Consider $\rho>0$.
Let $\Gamma_\rho(t)$ be the graph whose vertices are balls $B_1, B_2, \dots, B_n$. Two vertices $B_j$ and $B_k$ are connected by an edge in $\Gamma_\rho(t)$  if and only if $|x^j(t)-x^k(t)|\leq 2+\rho$.

We will say that a subfamily $\{B_{i_1}, B_{i_2}, \dots, B_{i_k}\}$ of balls is $\rho$-connected in $[s,u]$ if for every $t \in [s,u]$, all balls $\{B_{i_1}, B_{i_2}, \dots, B_{i_k}\}$ belong to a  connected component of $\Gamma_\rho(t)$ (the connected component may depend on $t\in [s,u]$). 

\begin{proof}[Proof of Theorem \ref{th:Np}]
\emph{Step 1}.
We will eventually use the assumption that $\rho\leq n^{-n}$, but we start only assuming that $\rho$ is a fixed number in $(0,1/4)$. 
Consider some $s\geq 0$, and
 $1\leq i,j \leq n$, $i\ne j$. 
Let $\tau^{ij}_0(s)=s$, and for $k\geq 0$,
\begin{align*}
\tau^{ij}_{2k+1}(s)&= \inf\{u\geq \tau^{ij}_{2k}(s): |x^i(u) - x^j(u)| \leq 2+ \rho/2\}, \\
\tau^{ij}_{2k+2}(s)&= \inf\{u\geq \tau^{ij}_{2k+1}(s): |x^i(u) - x^j(u)| >2+ \rho\},
\end{align*}
with the convention that $\inf \emptyset = \infty$.
For $0 \leq s < t < \infty$, 
let $\sigma^{ij}(s,t)$ be the largest $k$ such that $\tau^{ij}_{2k}(s) \leq t$, 
i.e., $\sigma^{ij}(s,t)$ is the number of upcrossings of the interval $[2+\rho/2, 2+\rho]$ by the function $u\to |x^i(u) - x^j(u)|$ on the interval $[s,t]$.  Note that for fixed $s$, this defines a non-decreasing function in $t$, which implies that $\sigma^{ij}(s,\infty) := \sup_{t \in (s,\infty)} \sigma^{ij}(s,t) = \lim_{t\to\infty} \sigma^{ij}(s,t)$ is well defined. Since there are only finitely many  upcrossings, the supremum in this definition can be replaced with the maximum.

For $0 \leq s < t \leq \infty$, let $S(s,t) = \sum_{i<j} \sigma^{ij}(s,t)$.
Let $M_\rho(n)$ be the supremum of $S(0,\infty)$
over all initial conditions for the family of $n$ balls.
We will use induction to estimate $M_\rho(n)$. Obviously, $M_\rho(2)=1$.

By Corollary \ref{a7.2},
 the family of $n$ balls can be partitioned into two non-empty subfamilies
 $ N_1$ and $ N_2$
such that if  $\ell\in N_1$, $k\in N_2$, and $t\geq T:=100 n^3 |\bx(0)|$ then
$|x^k(t) - x^\ell(t)|  >  5/2$.
We have assumed that $\rho<1/4$ so if  $\ell\in N_1$ and $k\in N_2$ then $\sigma^{\ell k}(T,\infty)=0$. Hence,
\begin{align}\label{a8.1}
S(T,\infty) =  \sum_{i<j;\ i,j\in N_1} \sigma^{ij}(T,\infty)
+\sum_{i<j;\ i,j\in N_2} \sigma^{ij}(T,\infty),
\end{align}
and, therefore,
\begin{align}\label{a7.4}
M_\rho(n) \leq \sup S(0,T) + 2 M_\rho(n-1)+n^2,
\end{align}
where the supremum is taken over all initial conditions. We added $n^2$ to  the right hand side of \eqref{a7.4} to account for the possibility that 
$\tau^{ij}_{2k+1}(0)< T <
\tau^{ij}_{2k+2}(0)$ for some $i,j,k$ (the number of pairs $(i,j)$ is bounded by $n^2$).

\bigskip
\emph{Step 2}.
To estimate $S(0,T)$, we will split the argument into two parts, depending on the value of $|\bx(0)|$. Let $R= 6n^{3/2}$. 

First assume that
$|\bx(0)| \leq R$. 
The distance between any two balls cannot change at a rate faster than 1 because we have assumed that $|\bv(t)|=1$ for all $t$. Hence, $\tau^{ij}_{2(k+1)}(s)-\tau^{ij}_{2k}(s) \geq \rho/2$ for all $k\geq 1$ such that $\tau^{ij}_{2k}(s)$ is finite. Also, $\tau^{ij}_{2}(s)-\tau^{ij}_{0}(s) \geq \rho/4$ because it is possible that $\tau^{ij}_1(s)=s$.

It follows  that $\tau^{ij}_{2k}(0)  \geq (2k-1)\rho/4$. Thus,
\begin{align*}
\sigma^{ij} (0,T) &\leq \frac{2 T }{\rho} +\frac12
= \frac {200n^3|\bx(0)|} {\rho} +\frac12 \\
&\leq \frac {200n^3 R} {\rho}+\frac1\rho
= \frac {200n^3 6n^{3/2}} {\rho}+\frac1\rho
\leq1201 n^{9/2}/ \rho.
\end{align*}
It follows that
\begin{align}\label{a7.6}
S(0,T) \leq \sum_{i<j; \ 1\leq i,j \leq n} 1201 n^{9/2}/ \rho
= \binom n 2 1201 n^{9/2}/ \rho \leq 601 n^{13/2}/ \rho.
\end{align}

Next suppose that $|\bx(0)| > R$. 
We will use the notation and estimates proved in Step 2 of the proof of Theorem \ref{th:nf}. 

Let $s_0 = 0$ and $s_{j+1} = s_{j} +  |\bx_0(s_j)|  n^{-3/2}/4$ for $j\geq 0$. 
It follows from \eqref{a7.3} that for each $j \geq 0$,
 the family of $n$ balls can be partitioned into two non-empty subfamilies
 $ N^j_1$ and $ N^j_2$
such that if  $\ell\in N^j_1$, $k\in N^j_2$, and $t\in  [s_j, s_j+ |\bx_0(s_j)| n^{-3/2}/4]$ then
$|x^k(t) - x^\ell(t)|  \geq 3$.
A reasoning based on a formula similar to \eqref{a8.1} yields $S(s_j,s_{j+1}) \leq 2 M_\rho(n-1)$ for all $j$. 
Let $j_* = \min\{j: s_j \geq T\}$.  In the following formula, we add $n^2$ to the estimate for the same reason as in \eqref{a7.4}. 
We use \eqref{a7.5} to get,
\begin{align}\label{a7.7}
S(0,T) &\leq \sum_{j=0}^{j_*}  \left( S(s_j, s_{j+1}) + n^2 \right)
\leq j_* ( 2 M_\rho(n-1) + n^2)\\
&\leq 18n^{3/2}\log(5n) ( 2 M_\rho(n-1) + n^2).\notag
\end{align}

\bigskip
\emph{Step 3}. We combine \eqref{a7.4}, \eqref{a7.6} and \eqref{a7.7} to obtain
\begin{align}\label{a8.2}
M_\rho(n) &\leq  2 M_\rho(n-1)+n^2
+ \max(601 n^{13/2}/ \rho, 18n^{3/2}\log(5n) ( 2 M_\rho(n-1) + n^2))%\\
%&= 2 M_\rho(n-1)
%+ \max\left(601 n^{13/2}/ \rho, 40n^{7/2}\log(5n)  M_\rho(n-1)\right )\notag\\
%&= 2 M_\rho(n-1)
%+ 40n^{7/2} \max\left(15 n^{3}/ \rho, \log(5n)  M_\rho(n-1)\right ),\notag
\end{align}
for all $n\geq 3$. 

We will prove by induction that if we set $\varphi_\rho(n) =  38^n (n!)^{3/2} (\log(5n))^n\rho^{-1}$ for $n\geq 2$, then
%$M_\rho(n)\leq \varphi_\rho(n)$ 
\begin{align}\label{a10.1}
M_\rho(n)\leq   \varphi_\rho(n)  - \frac{(n+1)^2}{2},
\end{align}
%\begin{align}\label{a8.3}
%M_\rho(n)\leq \varphi_\rho(n)
%:= 82^n(n!)^{7/2} (\log(5n))^n\rho^{-1}
%\end{align}
for all $n\geq 2$. For $n=2$, the bound holds because $M_\rho(2)=1$ and $\rho<1$. Assume that $M_\rho(n-1)\leq \varphi_\rho(n-1) -n^2/2$ for some $n\geq3$. From \eqref{a8.2} and the induction hypothesis, we obtain
\begin{align}
\label{a10.2}
M_\rho(n) &\leq 2 \varphi_\rho(n-1) +\max\left( 601n^{13/2}/\rho,36 n^{3/2}\log(5n) \varphi_\rho(n-1) \right).
\end{align}
By the Stirling-type bound $n!\geq\sqrt{2\pi n}(n/e)^n$, we have
\begin{align}\label{a11.3}
36 n^{3/2}\log(5n) \varphi_\rho(n-1)\rho & \geq 36^n (n!)^{3/2} (\log(5(n-1)))^n \\
&\geq 36^n (\sqrt{2\pi n} (n/e)^n)^{3/2} (\log(5(n-1)))^n \notag\\
&=  (36e^{-3/2})^n (2\pi)^{3/4} n^{3n/2+3/4} (\log(5(n-1)))^n.\notag
\end{align}
We claim that the last expression is larger than $601 n^{13/2}$ for all $n\geq 3$. This can be verified directly for $n=3$. For $n\geq 4$, note that $3n/2+3/4\geq 13/2$, so the right hand side of \eqref{a11.3} is larger than
\begin{align*}
(36e^{-3/2})^n (2\pi)^{3/4} n^{13/2} (\log(5(n-1)))^n \geq (36e^{-3/2})^4  (2\pi)^{3/4} n^{13/2} (\log(15))^4 \geq 601 n^{13/2}.
\end{align*}
Thus, \eqref{a10.2} becomes
\begin{align}
\label{a10.2a}
M_\rho(n) &\leq \left( 2  + 36 n^{3/2}\log(5n) \right) \varphi_\rho(n-1).
\end{align}

Before completing the proof, we will establish a simple inequality. For $n\geq 2$, we have $2n\geq 2^2\geq(1+2/n)^2$, from which the inequality $2n^3\geq (n+2)^2$ follows. 
By the change of variable, 
$(n-1)^3 \geq (n+1)^2/2$ for $n\geq 3$. Since $\rho<1$, it follows that 
\begin{align}\label{a11.4}
\varphi_\rho(n)\geq 38 (n!)^{3/2} \geq 38 (n-1)^3\geq 19 (n+1)^2.
\end{align}
 Going back to \eqref{a10.2a}, we compute
\begin{align*}
M_\rho(n) &\leq \left( 2  + 36 n^{3/2}\log(5n) \right) \varphi_\rho(n-1) 
= \frac{ 2  + 36 n^{3/2}\log(5n) }{ 38 n^{3/2} \log(5n) } \varphi_\rho(n)  \\
&\leq \frac{37}{38}  \varphi_\rho(n) =  \varphi_\rho(n) -  \frac{\varphi_\rho(n) }{38} \leq   \varphi_\rho(n) -  \frac{ (n+1)^2 }{2}, 
\end{align*}
where in the last inequality follows from \eqref{a11.4}. This completes the proof of \eqref{a10.1}.

It follows from \eqref{a10.1} that
\begin{align}
\label{a11.4a}
4n^4M_\rho(n) &\leq 
4n^4(   \varphi_\rho(n)  - (n+1)^2/2)
\leq 
4n^4   \varphi_\rho(n)  \\
&= 4n^4 38^n (n!)^{3/2} (\log(5n))^n\rho^{-1}  
\leq 4n^4 38^n (n^n)^{3/2} (\log(5n))^n\rho^{-1}\notag \\
&\leq
\rho^{-1} n^{(3/2+o(1))n}.\notag
\end{align}

%We have that
%\begin{align*}
%15\leq \log(15)\cdot 2^{7/2}(\log 10)^2 \leq \log(5n) ((n-1)!)^{7/2}(\log(5(n-1)))^{n-1}.
%\end{align*}
%We also have that $n^3\leq 82^{n-1}$, because $\log(n)/(n-1) \leq 1< \log(82)/3$. From these inequalities, it follows that
%\begin{align*}
%15n^3\rho^{-1} \leq \log(5n)\varphi_\rho(n-1).
%\end{align*}
%We use this inequality, together with \eqref{m16.2}, and the induction hypothesis to obtain 
%\begin{align*}
%M_\rho(n)&\leq 2\varphi_\rho(n-1) + 80n^{7/2}\log(5n)\varphi_\rho(n-1) \\
%&\leq 82n^{7/2}\log(5n)\varphi_\rho(n-1) \leq \varphi_\rho(n).
%\end{align*}

\bigskip
\emph{Step 4}.
Let $t_1 \leq t_2 \leq \dots\leq t_{M-1}$ be all finite times of the form $\tau^{ij}_{2k}(0)$,
for any $i,j$ and $k>1$. Let $t_0=0$ and $t_{M}=\infty$. Note that we have $M_\rho(n)\geq M-1\geq M/2$ for $n\geq 3$.
We have assumed in the theorem that the total number of collisions is not smaller than $N\geq n^{(3/2+\eps)n}\rho\geq n^{n/2}$. By time reversal, we do not loose any generality by assuming that there are at least $N/2$ collisions in $[0,\infty)$. It follows that  
there exists $j' \in \left\{0,\ldots,M-1 \right\}$ such that the interval $[t_{j'}, t_{j'+1})$  contains at least $N/(2M)$ collision times.
Let $u_i$, $i=1,\dots, i'$ be times such that  $t_{j'}\leq u_1 < u_2 < \dots u_{i'} < t_{j'+1} $ and $u_i$'s are the only times in $[t_{j'}, t_{j'+1})$ with the property that at time $u_i$ there is a collision of balls which did not collide in $[t_{j'}, u_i)$. There are $\binom n 2$ pairs of balls so $i' \leq n^2$. Therefore, one of the intervals $[u_i, u_{i+1})$ contains at least  $N/(2M n^2)$ collision times.
Fix an interval $[u_i, u_{i+1})$ with this property.
Let $J$ be the family of all pairs $(k_1, k_2)$ such that balls $B_{k_1}$ and $B_{k_2}$ collide in $[t_{j'},u_{i}]$. 
For $1\leq \ell,j \leq n$, we say that $\ell\sim j$ if there exist $k_1 = \ell, k_2,\dots,k_{m-1}, k_m=j$ such that $(k_r, k_{r+1})\in J$ for all $r$. This is an equivalence relation so it
 partitions $J$ into equivalence classes $J_1, J_2, \dots, J_{m'}$. 
Note that $m'\leq n^2$ so there exists $m_*$ such that there were at least 
$N/(2M n^4)$ collision times between balls $B_{k_1}$ and $B_{k_2}$ with $(k_1, k_2) \in J_{m_*}$ in $[u_i, u_{i+1})$.
For every $(k_1, k_2)\in J_{m_*}$, there was a time $t\in[t_{j'},u_{i+1}]$ such that $|x^{k_1} (t) - x^{k_2}(t)| = 2$. Since there are no times of the form $\tau^{ij}_{2k}(0)$ in $(t_{j'}, t_{j'+1})$, there are no such times in $(u_i, u_{i+1})\subset (t_{j'}, t_{j'+1})$. Hence $|x^{k_1} (t) - x^{k_2}(t)| \leq 2+\rho$ for all $t\in[u_i, u_{i+1})$ and $(k_1, k_2)\in J_{m_*}$.
Let $\calB :=\{B_{i_1}, B_{i_2}, \dots, B_{i_k}\}$ be the family of all balls  
such that $(i_\ell, i_r) \in J_{m_*}$ for all $\ell$ and $r$. We have proved that $\calB$  is $\rho$-connected on $[u_i, u_{i+1})$ and there are at least $N/(2M n^4)$ collisions  between these balls in this interval. 

%We now take $\rho=n^{-\frac 1 3 n^\alpha}$.
It follows from \eqref{a11.4a}, and the assumptions of the theorem that
\begin{align*}
N/(2M n^4) &\geq  N/(4M_\rho(n) n^4) \geq N\rho n^{-(3/2+o(1))n}.
%
%\frac{n^{n^\alpha}}{2\cdot 38^n (n!)^{3/2} (\log(5n))^n\rho^{-1} n^4}\\
%&= \frac{n^{n^\alpha}}{2\cdot38^n(n!)^{3/2} (\log(5n))^n n^{\frac 1 3 n^\alpha} n^4}.
\end{align*}
We have shown that $\calB$  is $\rho$-connected on  $[u_i, u_{i+1})$ and there are at least $N\rho n^{-(3/2+o(1))n}$ collisions  between these balls in this interval. 
\end{proof}

\begin{proof}[Proof of Corollary \ref{a11.2}] We apply Theorem \ref{th:Np} with $N =  n^{(5/2 + \eps)n}$ and $\rho=n^{-n}$. In this case, we have $\log(N\rho) = (3/2 + \eps)n\log n$, and thus,  we obtain a family $\calB$ of balls that is $\rho$-connected on some interval $[t_1,t_2]$, and the number of collisions among balls in $\calB$ on this interval is more than 
$$
N\rho n^{-(3/2+o(1))n} = n^{(5/2 + \eps - n -3/2+o(1))n} = n^{(\eps+o(1))n},
$$
which is larger that $n^{n\eps / 2}$ for large $n$.
\end{proof}

\begin{proof}[Proof of Corollary \ref{a11.1}]
We set  $N =  n^{n^\alpha}$ and $\rho=n^{-\frac13 n^\alpha}$. Since $\alpha >1$, we have $\rho\leq n^{-n}$, and also $\log(N\rho) = \frac23 n^{\alpha} \log n \geq 2n\log n$ for large enough $n$. An application of Theorem \ref{th:Np} yields a family $\calB$ of balls that is $n^{\frac13 n^\alpha}$-connected on some interval $[t_1,t_2]$, and the number of collisions among balls in $\calB$ on this interval is more than 
$$
N\rho n^{-(3/2+o(1))n} = n^{2 n^\alpha/3 -(3/2 +o(1))n}.
$$
The right hand side is larger than $n^{\frac13 n^\alpha}$ for large enough $n$. 
\end{proof}

\section{Acknowledgments}
We are grateful to Jayadev Athreya, Sara Billey, Branko Gr\"unbaum, Jaime San Martin and Rekha Thomas for very helpful advice.

\bibliographystyle{alpha}
\bibliography{hard}

\end{document}